\documentclass[a4paper]{amsart}
\title{Higher matrix-tree theorems}

\newcommand{\theoremName}{Theorem}
\newcommand{\lemmaName}{Lemma}
\newcommand{\corollaryName}{Corollary}
\newcommand{\statementName}{Proposition}

\newcommand{\remarkName}{Remark}
\newcommand{\exampleName}{Example}
\newcommand{\definitionName}{Definition}
\newcommand{\problemName}{Problem}
\newcommand{\proofName}{Proof}
\renewcommand{\proofname}{\proofName}
\newcommand{\answerName}{Answer}
\newcommand{\hintName}{Hint}

\theoremstyle{plain}

\newtheorem{theorem}{\theoremName}
\newtheorem{lemma}[theorem]{\lemmaName}
\newtheorem {corollary}[theorem]{\corollaryName}

\newtheorem {Theorem}{\theoremName}

\theoremstyle{remark}

\newtheorem{remark}[theorem]{\remarkName}
\newtheorem{Remark}{\remarkName}

\theoremstyle{definition}

\numberwithin{theorem}{section}

\makeatletter
\let\@newpf\proof 
\let\proof\relax

\def \namepf[#1] {\@newpf[\proofname\ #1]}
\newenvironment{proof}{\@ifnextchar[{\namepf}{\@newpf[\proofname]}}{\qed\endtrivlist}

\newcounter{qst}
\@addtoreset{qst}{problem}



\def \Real {{\mathbb R}}


\def \lnorm#1\rnorm {\vphantom{#1}\left\|\smash{#1}\right\|}
\def \lmod#1\rmod {\vphantom{#1}\left|\smash{#1}\right|}


\newcommand \bydef {\stackrel{\mbox{\scriptsize def}}{=}}


\@ifundefined{name}{\newcommand \name[1] {\mathop{\rm #1}\nolimits}}%
{\relax}


\newcommand \eps {\varepsilon}
\renewcommand \phi {\varphi}
\renewcommand \rho {\varrho}
\makeatother

\author[Yu.Burman]{Yurii Burman}
\thanks{Independent University of Moscow and Higher School of Economics, 
Moscow,Russia} 
\address{Independent University of Moscow, 119002, 11, B.Vlassievsky per., 
Moscow, Russia} 
\email[Yu.Burman]{burman@mccme.ru} 

\author[A.Ploskonosov]{Andrey Ploskonosov} 
\email[A.Ploskonosov]{strashila@newmail.ru}

\author[A.Trofimova]{Anastasia Trofimova}
\email[A.Trofimova]{camomile252@mail.ru}

\date{}
\usepackage{emlines}
\begin{document}

 \begin{abstract}
We calculate determinants of weighted sums of reflections and of (nested) 
commutators of reflections. The results obtained generalize the Kirchhoff's 
matrix-tree theorem and the matrix-3-hypertree theorem by G.\,Massbaum and 
A.\,Vaintrob.
 \end{abstract}

\maketitle

\section*{Introduction}

A famous matrix-tree theorem first proved by Kirchhoff \cite{Kirchhoff} in 
1847 recently attracted attention of specialists in algebraic combinatorics 
(see e.g.\ the paper \cite{Chaiken} containing a good review of other 
results in the field as well, and also some generalizations in \cite{BSh}). 
The theorem in its classical form expresses the principal minor of some $n 
\times n$-matrix (see $L_w$ in Section \ref{SSec:AnK1} below) via summation 
over the set of trees on $n$ numbered vertices. The matrix involved is the 
matrix of a weighted sum of the operators $(I - s)$ where $s$ runs through 
all the reflections in the Coxeter group $A_{n-1}$. We prove an analog of 
Kirchhoff's formula for {\em any} system of reflections in a Euclidean 
space; instead of trees, our result involves summation over the set of 
bases made out of vectors normal to reflection hyperplanes.

In 2002 G.\,Massbaum and A.\,Vaintrob found a beautiful extension of the 
Kirchhoff's theorem. Their result expresses the Pfaffian of a principal 
minor of some skew-symmetric $(2m+1) \times (2m+1)$-matrix via summation 
over a set of $3$-trees (contractible topological spaces made by gluing 
triangles by their vertices) with $m$ edges. The matrix (see $T$ in Section 
\ref{SSec:MV} below) is the weighted sum of commutants $[s_1, s_2]$ where 
$s_1$ and $s_2$ run through the $A_{n-1}$ set of reflections. Again, we 
generalize this theorem to an arbitrary set of reflections, allowing also 
nested commutators of the form $[s_1, [s_2, \dots, s_k] \dots ]$ for any 
$k$. This is the main result of the paper, Theorem \ref{Th:GenDet} and its 
reformulation, Theorem \ref{Th:GenDet}'. The answer is given via summation 
over the set of ``discrete one-dimensional oriented manifold with 
boundary'', which are just directed graphs made up of chains and cycles; 
the weight of the graph is obtained by a sort of discrete path integration.

\section{General theorems}\label{Sec:General}

Let $V$ be a $n$-dimensional Euclidean space with an orthonormal basis 
$f_1, \dots, f_n$, and let $e_1, \dots, e_N \in V$ be vectors of unit 
length. Denote by $s_i: V \to V$ the reflection in the hyperplane normal to 
$e_i$: $s_i(v) = v - 2(e_i,v)e_i$. 

Denote by $Q_e: V \to V$, where $e \bydef (e_1, \dots, e_k)$, a rank $1$ 
linear operator given by the formula
 \begin{equation*}
Q_e(v) = (v,e_1)(e_1,e_2) \dots (e_{k-1},e_k) e_k
 \end{equation*}
$e \mapsto Q_e$ is a $\name{End}(V)$-valued quadratic function of its 
arguments. Fix now $e_1, \dots, e_k$, and for any permutation $\sigma \in 
S_k$ consider an operator $Q_{\sigma(e)}: V \to V$ where $\sigma(e) \bydef 
(e_{\sigma(1)}, \dots, e_{\sigma(k)})$. Extend the correspondence $\sigma 
\mapsto Q_{\sigma(e)}$ by linearity to a quadratic map $Q: \Real[S_k] \to 
\name{End}(V)$ from the group algebra of the symmetric group $S_k$ to the 
algebra of linear operators $V \to V$: if $x = \sum_{\sigma \in S_k} 
c_\sigma \sigma$ (where $c_\sigma \in \Real$) then $Q(x) \bydef 
\sum_{\sigma \in S_k} c_\sigma Q_{\sigma(e)}$.

An easy induction proves the following
 \begin{lemma} \label{Lm:MultiComm}
$[s_k, [s_{k-1}, [\dots, [s_2, s_1]\dots] = Q(x_k)$ with $x_k = 
-2^k(\tau_k-1)(\tau_{k-1}-1) \dots (\tau_2-1)$ where $\tau_\ell \bydef (12 
\dots \ell)$, a permutation mapping $1 \mapsto 2 \mapsto \dots \mapsto \ell 
\mapsto 1$ and leaving all $i > \ell$ fixed. 
 \end{lemma}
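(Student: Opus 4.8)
The plan is to prove the identity by induction on $k$, the engine being a description of how a single reflection $s_k$ multiplies a rank-one operator $Q_{(a_1,\dots,a_m)}$ from either side. Write $C_k \bydef [s_k,[s_{k-1},[\cdots[s_2,s_1]\cdots]]]$, so that $C_k = [s_k, C_{k-1}]$ with base case $C_2 = [s_2,s_1]$. Since $Q$ is linear on $\Real[S_k]$, it suffices to track how $[s_k,-]$ transforms each generator $Q_{\sigma(e)}$ and then read the outcome back as $Q$ of an element of the group algebra; feeding the inductive hypothesis $C_{k-1}=Q(x_{k-1})$ into this will produce a recursion for $x_k$ that I then unroll.

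First I would establish two one-sided formulas. Applying $s_k$ on the output side of $Q_{(a_1,\dots,a_m)}(v) = (v,a_1)(a_1,a_2)\cdots(a_{m-1},a_m)\,a_m$ and using $s_k(a_m)=a_m-2(e_k,a_m)e_k$ gives $s_k Q_{(a_1,\dots,a_m)} = Q_{(a_1,\dots,a_m)} - 2\,Q_{(a_1,\dots,a_m,e_k)}$, i.e.\ left multiplication \emph{appends} $e_k$. Applying $s_k$ on the input side, via $(s_k(v),a_1)=(v,a_1)-2(e_k,v)(e_k,a_1)$, gives $Q_{(a_1,\dots,a_m)} s_k = Q_{(a_1,\dots,a_m)} - 2\,Q_{(e_k,a_1,\dots,a_m)}$, i.e.\ right multiplication \emph{prepends} $e_k$. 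Subtracting, the diagonal terms cancel and I obtain the key identity
\[
[s_k, Q_{(a_1,\dots,a_m)}] = 2\bigl(Q_{(e_k,a_1,\dots,a_m)} - Q_{(a_1,\dots,a_m,e_k)}\bigr),
\]
so commuting with $s_k$ replaces a tuple by the difference of its two $e_k$-extensions.

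Next I would translate this into multiplication in $\Real[S_k]$. If $(a_1,\dots,a_{k-1})=(e_{\sigma(1)},\dots,e_{\sigma(k-1)})$ for $\sigma\in S_{k-1}$, then the appended tuple is $Q_{\sigma(e)}$ with $\sigma$ regarded in $S_k$ (fixing $k$), while the prepended tuple $(e_k,e_{\sigma(1)},\dots,e_{\sigma(k-1)})$ is $Q_{\pi(e)}$, where $\pi$ is $\sigma$ composed with the long cycle $\tau_k$. The identity above then reads $[s_k,Q(\sigma)] = 2\,Q\bigl((\tau_k-1)\sigma\bigr)$, and by linearity $C_k = [s_k,Q(x_{k-1})] = Q\bigl(2(\tau_k-1)x_{k-1}\bigr)$, giving the recursion $x_k = 2(\tau_k-1)x_{k-1}$. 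The base case is $C_2 = 4Q_{(e_1,e_2)}-4Q_{(e_2,e_1)} = Q(4-4\tau_2)=Q(-4(\tau_2-1))$, so $x_2=-4(\tau_2-1)$, and unrolling the recursion gives $x_k = -2^k(\tau_k-1)(\tau_{k-1}-1)\cdots(\tau_2-1)$, as asserted.

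The one delicate point — and the only place the argument is more than mechanical — is the dictionary between the combinatorial operations ``prepend $e_k$''/``append $e_k$'' on tuples and multiplication by the cycle $\tau_k$ in the group algebra. The hard part will be verifying that shifting $(e_k,e_{\sigma(1)},\dots,e_{\sigma(k-1)})$ back into the normal form $(e_{\pi(1)},\dots,e_{\pi(k)})$ produces exactly left multiplication by $\tau_k$ (the reindexing could equally present itself as right multiplication by $\tau_k^{-1}$, according to the composition convention), so that the factors $(\tau_\ell-1)$ assemble in the stated decreasing order and the power of $2$ and the overall sign are correct. Once this identification is pinned down the induction closes immediately.
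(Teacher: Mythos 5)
Your skeleton is certainly the intended ``easy induction'' (the paper prints no proof at all), and most of it checks out: the one-sided formulas $s_k Q_{(a_1,\dots,a_m)} = Q_{(a_1,\dots,a_m)} - 2Q_{(a_1,\dots,a_m,e_k)}$ and $Q_{(a_1,\dots,a_m)} s_k = Q_{(a_1,\dots,a_m)} - 2Q_{(e_k,a_1,\dots,a_m)}$ are correct, hence so is the key commutator identity, and the base case $x_2=-4(\tau_2-1)$ is right. But the step you yourself flag as ``the one delicate point'' is a genuine gap, and the branch you commit to is the wrong one of the two alternatives you mention. With the paper's explicit convention $\sigma(e)\bydef(e_{\sigma(1)},\dots,e_{\sigma(k)})$ and ordinary composition $(\alpha\beta)(i)=\alpha(\beta(i))$, the prepended tuple $(e_k,e_{\sigma(1)},\dots,e_{\sigma(k-1)})$ is $\pi(e)$ with $\pi(1)=k$ and $\pi(i)=\sigma(i-1)$, i.e.\ $\pi=\sigma\tau_k^{-1}$: this is \emph{right} multiplication by $\tau_k^{-1}$, not left multiplication by $\tau_k$ (indeed $(\tau_k\sigma)(1)=\tau_k(\sigma(1))\ne k$ in general). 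The recursion is therefore $x_k=2\,x_{k-1}(\tau_k^{-1}-1)$, which unrolls to $x_k=-2^k(\tau_2-1)(\tau_3^{-1}-1)\cdots(\tau_k^{-1}-1)$ --- the image of the stated $-2^k(\tau_k-1)\cdots(\tau_2-1)$ under the inversion anti-automorphism $\sigma\mapsto\sigma^{-1}$ of $\Real[S_k]$, and for $k\ge3$ a genuinely different element with a different $Q$-image. So ``the induction closes immediately'' is false as written.

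The failure is already visible at $k=3$. The correct computation gives
\begin{equation*}
[s_3,[s_2,s_1]] \;=\; 8\bigl(Q_{(e_3,e_1,e_2)} - Q_{(e_1,e_2,e_3)} - Q_{(e_3,e_2,e_1)} + Q_{(e_2,e_1,e_3)}\bigr)
\;=\; Q\bigl(8(\tau_3^{-1} + \tau_2 - (1\,3) - 1)\bigr),
\end{equation*}
whereas your recursion $x_3=2(\tau_3-1)x_2=-8(\tau_3-1)(\tau_2-1)$ expands to $8(\tau_3+\tau_2-(1\,3)-1)$: the two differ exactly in $\tau_3$ versus $\tau_3^{-1}$, and $Q(\tau_3)=Q_{(e_2,e_3,e_1)}$ (image $\Real e_1$) is a different operator from $Q(\tau_3^{-1})=Q_{(e_3,e_1,e_2)}$ (image $\Real e_2$) for generic $e_i$'s. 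Note that this is not merely your slip: the honest unrolling matches the lemma as printed only after replacing each $\tau_\ell$ by $\tau_\ell^{-1}$, equivalently reading $\sigma(e)$ as the place-permutation tuple $(e_{\sigma^{-1}(1)},\dots,e_{\sigma^{-1}(k)})$ --- a discrepancy invisible for $k\le 2$ (the only cases instantiated later, since $\tau_2^{-1}=\tau_2$) and apparently compensated by the $a_k(\sigma^{-1})$ the paper inserts in \eqref{Eq:Aw}. So pinning down this dictionary is not a mechanical afterthought; it is the actual content of the lemma, and your write-up leaves it both unverified and, as stated, resolved incorrectly.
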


Define integers $a_k(\sigma)$ by the equation $x_k = \sum_{\sigma \in S_k} 
a_k(\sigma) \sigma$.

Introduce now, for all $1 \le i_1, \dots, i_k \le N$, weights $w_{i_1, 
\dots, i_k}$, which are elements of a commutative associative algebra. 
Consider the operator
 \begin{equation}\label{Eq:DefP}
P_w^{(k)} \bydef \sum_{i_1, \dots, i_k = 1}^N w_{i_1, \dots, i_k} [s_{i_k}, 
[s_{i_{k-1}},\dots, [s_{i_2},s_{i_1}]\dots]
 \end{equation}
By Lemma \ref{Lm:MultiComm} 
 \begin{equation}\label{Eq:Aw}
P_w^{(k)} = \sum_{i_1,\dots,i_k = 1}^N \sum_{\sigma 
\in S_k} a_k(\sigma) Q(e_{i_{\sigma(1)}}, \dots, e_{i_{\sigma(k)}}) =
\sum_{j_1,\dots,j_k = 1}^N u_{j_1, \dots, j_k} Q(e_{j_1}, \dots, e_{j_k})
 \end{equation}
where $u_{j_1, \dots, j_k} \bydef \sum_{\sigma \in S_k} a_k(\sigma^{-1}) 
w_{j_{\sigma(1)}, \dots, j_{\sigma(k)}}$.

 \begin{corollary}[of Lemma \ref{Lm:MultiComm}]\label{Cr:Reverse}
$u_{j_k, \dots, j_1} = (-1)^{k-1} u_{j_1, \dots, j_k}$
 \end{corollary}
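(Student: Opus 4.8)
The plan is to reduce the stated identity on the numbers $u$ to a single identity on the coefficients $a_k(\sigma)$, and then to prove the latter from self-adjointness of the reflections $s_i$ together with Lemma \ref{Lm:MultiComm}.

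First I would carry out the reduction. Write $\rho\in S_k$ for the reversal $\rho(\ell)=k+1-\ell$. Substituting $i_m=j_{k+1-m}$ into $u_{i_1,\dots,i_k}=\sum_\sigma a_k(\sigma^{-1})w_{i_{\sigma(1)},\dots,i_{\sigma(k)}}$ gives $i_{\sigma(\ell)}=j_{(\rho\sigma)(\ell)}$, so $u_{j_k,\dots,j_1}=\sum_\sigma a_k(\sigma^{-1})w_{j_{(\rho\sigma)(1)},\dots,j_{(\rho\sigma)(k)}}$. Re-indexing by $\pi=\rho\sigma$ (so that $\sigma^{-1}=\pi^{-1}\rho$, using $\rho^{-1}=\rho$) turns this into $\sum_\pi a_k(\pi^{-1}\rho)\,w_{j_{\pi(1)},\dots,j_{\pi(k)}}$, whereas $u_{j_1,\dots,j_k}=\sum_\pi a_k(\pi^{-1})w_{j_{\pi(1)},\dots,j_{\pi(k)}}$. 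Since the weights are arbitrary, the Corollary is equivalent to the purely group-algebra statement $a_k(\sigma\rho)=(-1)^{k-1}a_k(\sigma)$ for every $\sigma\in S_k$, i.e.\ to $x_k\rho=(-1)^{k-1}x_k$ in $\Real[S_k]$.

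Next I would prove this coefficient identity. Each $s_i$ is self-adjoint and $[X,Y]^\top=[Y^\top,X^\top]$, so an easy induction on $k$ (mirroring the induction behind Lemma \ref{Lm:MultiComm}) shows that the nested commutator $C(i_1,\dots,i_k):=[s_{i_k},[\dots,[s_{i_2},s_{i_1}]\dots]$ satisfies $C(i_1,\dots,i_k)^\top=(-1)^{k-1}C(i_1,\dots,i_k)$, with \emph{no} reversal of indices. On the other hand $Q_e$ is the rank-one operator $v\mapsto(v,e_1)(e_1,e_2)\cdots(e_{k-1},e_k)e_k$, and a one-line check gives $Q(e_{j_1},\dots,e_{j_k})^\top=Q(e_{j_k},\dots,e_{j_1})$. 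Feeding this into the expansion $C(i_1,\dots,i_k)=\sum_\sigma a_k(\sigma)Q(e_{i_{\sigma(1)}},\dots,e_{i_{\sigma(k)}})$ from \eqref{Eq:Aw} and comparing with $(-1)^{k-1}C(i_1,\dots,i_k)$ yields $\sum_\pi\bigl(a_k(\pi\rho)-(-1)^{k-1}a_k(\pi)\bigr)Q(e_{i_{\pi(1)}},\dots,e_{i_{\pi(k)}})=0$ for every configuration of unit vectors.

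The main obstacle is passing from this relation among operators to the term-by-term identity for the coefficients. At a fixed configuration the operators $Q(e_{i_{\pi(1)}},\dots,e_{i_{\pi(k)}})$ are far from independent: being rank one they depend on $\pi$ only through the ordered endpoint pair $(\pi(1),\pi(k))$ up to a scalar, so for $k\ge4$ several permutations give proportional operators. I would resolve this by varying the configuration: taking $\dim V$ large, treating the Gram entries $(e_a,e_b)$ as independent indeterminates and pairing the relation with two auxiliary generic vectors, the contribution of each $\pi$ becomes the monomial in the Gram entries read off along the path $\pi(1)\to\cdots\to\pi(k)$, and these monomials are pairwise distinct; hence every $a_k(\pi\rho)-(-1)^{k-1}a_k(\pi)$ vanishes. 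A fully self-contained alternative avoids the independence issue: the induction behind Lemma \ref{Lm:MultiComm} shows that $x_k$ is produced from $x_{k-1}$ by appending the new index $k$ with weight $-2$ and prepending it with weight $+2$; reversal of index-strings interchanges appending and prepending and flips the sign, so reversal anticommutes with each such step, and the accumulated sign over the $k-1$ steps is exactly $(-1)^{k-1}$, giving $x_k\rho=(-1)^{k-1}x_k$ directly.
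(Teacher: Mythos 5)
Your proof is correct, but it takes a genuinely different route from the paper's, and the difference is substantive. The paper also works, in effect, with the identity $x_k\rho=(-1)^{k-1}x_k$ in $\Real[S_k]$ ($\rho$ the reversal), but it derives it from the factorization in Lemma \ref{Lm:MultiComm} by asserting that $u_{j_1,\dots,j_k}$ changes sign whenever a \emph{single} $\tau_\ell$, $2\le\ell\le k$, is applied to its arguments, and then composing these flips via $\tau_2\cdots\tau_k=\rho$. You never use that per-$\tau_\ell$ claim, and rightly so: it fails for $k\ge3$. Expanding $[s_3,[s_2,s_1]]$ directly gives
\begin{equation*}
u_{j_1,j_2,j_3}=8\bigl(-w_{j_1,j_2,j_3}+w_{j_2,j_1,j_3}+w_{j_2,j_3,j_1}-w_{j_3,j_2,j_1}\bigr),
\end{equation*}
which does satisfy the Corollary $u_{j_3,j_2,j_1}=u_{j_1,j_2,j_3}$, but applying the cycle $\tau_3$ (or even the transposition $\tau_2$) to the arguments produces monomials such as $w_{j_3,j_1,j_2}$ and $w_{j_1,j_3,j_2}$ that are absent from $u_{j_1,j_2,j_3}$, so no sign identity holds for generic weights; only the full reversal is a symmetry, essentially because right multiplication by $\tau_\ell$ does not interact with the factor $(\tau_\ell-1)$ sitting in the middle of the product for $x_k$. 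Your Route 2 --- reversal anticommutes with each step $Q_{(a_1,\dots,a_{m-1})}\mapsto 2Q_{(e_m,a_1,\dots,a_{m-1})}-2Q_{(a_1,\dots,a_{m-1},e_m)}$ of the recursion and fixes the one-letter base, accumulating $(-1)^{k-1}$ over $k-1$ steps --- is exactly the correct repair, and it is self-contained (it also sidesteps an apparent composition-convention slip in the printed $\tau_\ell$'s of Lemma \ref{Lm:MultiComm}: the $k=3$ expansion above involves $\tau_3^{-1}$, not $\tau_3$, under the standard convention). Route 1 is sound as well, and your genericity device genuinely closes the independence gap you identified: the start-vertex variable, the end-vertex variable, and the Gram monomial recover a Hamiltonian path from its initial vertex and unordered edge multiset, and symmetric unit-diagonal matrices near the identity are Gram matrices of unit vectors, so all coefficients $a_k(\pi\rho)-(-1)^{k-1}a_k(\pi)$ vanish. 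One trifle: your stated \emph{equivalence} of the Corollary with $x_k\rho=(-1)^{k-1}x_k$ requires $N\ge k$ so that pairwise distinct indices exist, but since you only use the direction from the coefficient identity to the Corollary, this costs nothing.
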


 \begin{proof}
According to Lemma \ref{Lm:MultiComm} $u_{j_1, \dots, j_k}$ changes its 
sign if a permutation $\tau_\ell$ with any $2 \le \ell \le k$ is applied to 
its arguments. Note now that $\tau_2 \dots \tau_k$ is a permutation of $1, 
\dots, k$ exchanging $1 \leftrightarrow k$, $2 \leftrightarrow k-1$, etc.
 \end{proof}

 \begin{theorem} \label{Th:GenDet}
 \begin{align}
\det P_w^{(k)} = \sum^{(N, \dots, N)}_{\begin{array}{c} \scriptstyle 
j^{(1)}, \dots, j^{(n)} = (1,\dots,1)\\ \scriptstyle j^{(1)}_k < \dots < 
j^{(n)}_k \end{array}} &u_{j^{(1)}} \dots u_{j^{(n)}} \prod_{s=1}^{k-1} 
\prod_{t=1}^n (e_{j_s^{(t)}}, e_{j_{s+1}^{(t)}}) \label{Eq:GenDet}\\
&\times \name{vol}(e_{j_1^{(1)}}, \dots, 
e_{j_1^{(n)}})\name{vol}(e_{j_k^{(1)}}, \dots, e_{j_k^{(n)}}).\nonumber
 \end{align}
Here $j^{(s)} \bydef (j^{(s)}_1, \dots, j^{(s)}_k)$ is a multi-index, and 
$\name{vol}(b_1, \dots, b_n)$ stands for an $n$-dimensional volume of the 
parallelepiped spanned by the vectors $b_1, \dots, b_n \in V$.
 \end{theorem}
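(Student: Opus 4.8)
The plan is to read formula \eqref{Eq:Aw} as an expansion of $P_w^{(k)}$ into rank-one operators and then to apply the Cauchy--Binet formula. First I would note that each summand in \eqref{Eq:Aw} is indeed of rank one: from the definition of $Q$,
\[
Q(e_{j_1},\dots,e_{j_k}) = \Big(\prod_{s=1}^{k-1}(e_{j_s},e_{j_{s+1}})\Big)\, e_{j_k}\otimes e_{j_1},
\]
where $(e_{j_k}\otimes e_{j_1})(v)\bydef (v,e_{j_1})\,e_{j_k}$. Setting $c_J \bydef u_{j_1,\dots,j_k}\prod_{s=1}^{k-1}(e_{j_s},e_{j_{s+1}})$ for a multi-index $J=(j_1,\dots,j_k)$, formula \eqref{Eq:Aw} becomes $P_w^{(k)} = \sum_J c_J\, e_{j_k}\otimes e_{j_1}$. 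Writing all operators as matrices in the orthonormal basis $f_1,\dots,f_n$, this is the product $P_w^{(k)} = F G^{T}$, where $F$ is the $n\times N^k$ matrix whose column indexed by $J$ equals $c_J e_{j_k}$ and $G$ is the $n\times N^k$ matrix whose column indexed by $J$ equals $e_{j_1}$.

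Next I would apply Cauchy--Binet (valid over any commutative ring, so the algebra-valued weights cause no trouble): $\det P_w^{(k)} = \det(F G^{T}) = \sum_{S} \det(F_S)\,\det(G_S)$, the sum running over $n$-element subsets $S$ of the set of multi-indices $\{1,\dots,N\}^k$, with $F_S, G_S$ the $n\times n$ matrices formed by the columns indexed by $S$. Pulling the scalars out of $F_S$ gives $\det(F_S) = \big(\prod_{J\in S} c_J\big)\det\big([e_{j_k}]_{J\in S}\big)$ while $\det(G_S) = \det\big([e_{j_1}]_{J\in S}\big)$; expanding $\prod_{J\in S} c_J$ then produces exactly the factors $\prod_t u_{j^{(t)}}$ and $\prod_{s,t}(e_{j_s^{(t)}},e_{j_{s+1}^{(t)}})$ appearing in \eqref{Eq:GenDet}.

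It remains to reorganize the subset sum into the ordered sum of the statement, and this is where the care lies. Each summand attached to a subset $S$ is independent of the order of its elements, since a common permutation of the columns multiplies both $\det([e_{j_k}]_{J\in S})$ and $\det([e_{j_1}]_{J\in S})$ by the same sign. A subset $S$ in which two multi-indices share the same last coordinate $j_k$ contributes zero, because two columns of $[e_{j_k}]_{J\in S}$ then coincide; hence only subsets with pairwise distinct last coordinates survive, and each is represented exactly once by ordering its elements so that $j_k^{(1)} < \dots < j_k^{(n)}$. Reading $\name{vol}$ as the signed determinant with respect to the fixed orientation of $V$, the surviving product $\det([e_{j_1}]_{J\in S})\det([e_{j_k}]_{J\in S})$ is precisely $\name{vol}(e_{j_1^{(1)}},\dots,e_{j_1^{(n)}})\,\name{vol}(e_{j_k^{(1)}},\dots,e_{j_k^{(n)}})$, and \eqref{Eq:GenDet} follows. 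The main obstacle is exactly this bookkeeping: justifying the collapse of the unordered subset sum to the strictly increasing index sum, and matching the two Cauchy--Binet determinants to the oriented volumes with the correct sign.
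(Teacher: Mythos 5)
Your proposal is correct and is essentially the paper's own argument in different packaging: the paper starts from the same rank-one decomposition of $P_w^{(k)}$ given by \eqref{Eq:Aw} and expands $P_w^{(k)}(f_1) \wedge \dots \wedge P_w^{(k)}(f_n)$ multilinearly, which is precisely the standard proof of the Cauchy--Binet identity you invoke. The bookkeeping you flag as the main obstacle --- vanishing of terms with repeated last indices, collapsing to the strictly increasing order $j^{(1)}_k < \dots < j^{(n)}_k$, and reading both determinants as signed volumes --- appears verbatim in the paper's proof (there the first volume arises as $\sum_{\sigma \in S_n} (-1)^\sigma \prod_p (e_{j_1^{(p)}}, f_{\sigma(p)})$ and the second from the coefficient of $e_{j_k^{(1)}} \wedge \dots \wedge e_{j_k^{(n)}}$), so your treatment matches it step for step.
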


 \begin{proof}
Let $f_1, \dots, f_n$ be an orthonormal basis in $V$. Then it follows from 
\eqref{Eq:Aw} that 
 \begin{align*}
P_w^{(k)}(f_1) \wedge &\dots \wedge P_w^{(k)}(f_n) \\
&= \sum_{j^{(1)}, \dots, j^{(n)} 
= (1, \dots, 1)}^{(N,\dots,N)} u_{j^{(1)}} \dots u_{j^{(n)}} 
Q_{(e_{j_1^{(1)}}, \dots, e_{j_k^{(1)}})}(f_1) \wedge \dots 
\wedge Q_{(e_{j_1^{(n)}}, \dots, e_{j_k^{(n)}})}(f_n)\\ 
&= \sum_{j^{(1)}, \dots, j^{(n)} = (1, \dots, 1)}^{(N,\dots,N)} u_{j^{(1)}} 
\dots u_{j^{(n)}} \prod_{p=1}^n (e_{j_1^{(p)}}, f_p) \prod_{s=1}^{k-1} 
\prod_{t=1}^n (e_{j_s^{(t)}}, e_{j_{s+1}^{(t)}})\\
&\hphantom{= \sum_{j^{(1)}, \dots, j^{(n)} = (1, \dots, 1)}^{(N,\dots,N)} 
u_{j^{(1)}} \dots u_{j^{(n)}} \prod_{p=1}^n (e_{j_1^{(p)}}, f_p) 
\prod_{s=1}^{k-1} \prod_{t=1}^n}
\times e_{j_k^{(1)}} \wedge \dots \wedge e_{j_k^{(n)}}
 \end{align*}
The wedge product at the end changes its sign if two multi-indices 
$j^{(a)}$ and $j^{(b)}$ are exchanged. Additionally, all $j_k^{(i)}$ should 
be distinct, else the term is zero. Thus we can restrict summation to the 
multi-indices such that $j^{(1)}_k < \dots < j^{(n)}_k$, and then make an 
additional summation over the set of permutations $\sigma$ of $n$ points:
 \begin{align*}
P_w^{(k)}(f_1) \wedge &\dots \wedge P_w^{(k)}(f_n) \\
&= \sum^{(N, \dots, N)}_{\begin{array}{c} \scriptstyle j^{(1)}, 
\dots, j^{(n)} = (1,\dots,1)\\ \scriptstyle j^{(1)}_k < \dots < j^{(n)}_k 
\end{array}} u_{j^{(1)}} \dots u_{j^{(n)}} \prod_{s=1}^{k-1} \prod_{t=1}^n 
(e_{j_s^{(t)}}, e_{j_{s+1}^{(t)}}) \\
&\times \sum_{\sigma \in S_n} (-1)^\sigma \prod_{p=1}^n (e_{j_1^{(p)}}, 
f_{\sigma(p)}) e_{j_k^{(1)}} \wedge \dots \wedge e_{j_k^{(n)}}\\ 
&= \sum^{(N, \dots, N)}_{\begin{array}{c} \scriptstyle j^{(1)}, \dots, 
j^{(n)} = (1,\dots,1)\\ \scriptstyle j^{(1)}_k < \dots < j^{(n)}_k 
\end{array}} u_{j^{(1)}} \dots u_{j^{(n)}} \prod_{s=1}^{k-1} \prod_{t=1}^n 
(e_{j_s^{(t)}}, e_{j_{s+1}^{(t)}}) \\ 
&\times \name{vol}(e_{j_1^{(1)}}, \dots, 
e_{j_1^{(n)}}) \name{vol}(e_{j_k^{(1)}}, \dots, e_{j_k^{(n)}}) \times f_1 
\wedge \dots \wedge f_n.
 \end{align*}
The coefficient at $f_1 \wedge \dots \wedge f_n$ is $\det P_w^{(k)}$.
 \end{proof}

Theorem \ref{Th:GenDet} admits a beautiful reformulation. For every set 
$j^{(1)}, \dots, j^{(n)}$ of multi-indices in the sum draw a graph with 
the vertices $1, \dots, N$ and oriented edges joining $j_1^{(i)}$ with 
$j_k^{(i)}$ for all $i = 1, \dots, n$. The volume in the formula may be 
nonzero only if all its arguments are distinct; so in the graph obtained 
for every vertex there is at most one outgoing edge and at most one 
incoming edge. This means that every connected component of the graph is 
either an oriented cycle or an oriented chain --- thus, the graph is a 
``discrete one-dimensional oriented manifold with boundary'' (abbreviated 
as {\em DOOMB} below).

For every edge $\eps_i = (j_1^{(i)}, j_k^{(i)})$ of the graph consider a 
path (a sequence of vertices) $j^{(i)} = (j_1^{(i)}, j_2^{(i)}, \dots, 
j_k^{(i)})$. This path has a weight $u_{j^{(i)}} (e_{j_1^{(i)}}, 
e_{j_2^{(i)}}) \dots (e_{j_{k-1}^{(i)}}, e_{j_k^{(i)}})$; call the 
$k$-weight of the edge $\eps_i$ the sum of weights of all the paths of 
length $k$ joining its endpoints. The $k$-weight of the graph $\Gamma$ is 
the product of the $k$-weights of its edges. Then Theorem \ref{Th:GenDet} 
is equivalent to

{\def \theoremName {Theorem \ref{Th:GenDet}'}
 \begin{Theorem}
The determinant of the operator $P_w^{(k)}$ is equal to the sum of 
$k$-weights of all the DOOMBs having $n$ edges with the vertices $1, \dots, 
N$, each weight multiplied by $\name{vol}(e_{p_1}, \dots, e_{p_n}) 
\name{vol}(e_{q_1}, \dots, e_{q_n})$ where $p_i$ and $q_i$ are a starting 
vertex and a final vertex, respectively, of the $i$-th edge of the graph, 
$1 \le i \le n$.
 \end{Theorem}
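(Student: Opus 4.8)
The plan is to check that the sum \eqref{Eq:GenDet} of Theorem \ref{Th:GenDet}, already proved above, is nothing but the DOOMB sum described in the statement rewritten after a regrouping of summation variables; no new analytic input is needed. The key structural fact is that the summand of \eqref{Eq:GenDet} factorizes over the index $t$, which labels the edges. Setting $p_t \bydef j_1^{(t)}$ and $q_t \bydef j_k^{(t)}$ for the endpoints of the $t$-th multi-index, one has
\[
u_{j^{(1)}} \dots u_{j^{(n)}} \prod_{s=1}^{k-1} \prod_{t=1}^n (e_{j_s^{(t)}}, e_{j_{s+1}^{(t)}}) = \prod_{t=1}^n \Bigl( u_{j^{(t)}} \prod_{s=1}^{k-1} (e_{j_s^{(t)}}, e_{j_{s+1}^{(t)}}) \Bigr),
\]
while the two oriented volumes $\name{vol}(e_{p_1}, \dots, e_{p_n})$ and $\name{vol}(e_{q_1}, \dots, e_{q_n})$ depend on the multi-indices only through their endpoints $p_t$ and $q_t$.

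First I would reorganize \eqref{Eq:GenDet} by summing in two stages: an outer sum over the endpoint data $(p_t, q_t)_{t=1}^n$ (subject to $q_1 < \dots < q_n$), and an inner sum over the interior vertices $j_2^{(t)}, \dots, j_{k-1}^{(t)}$ of every multi-index. The volume factors are constant throughout the inner sum, so they pull out; the path part factorizes over $t$ by the display above, so the inner sum becomes a product over $t$ of $\sum u_{j^{(t)}} \prod_{s=1}^{k-1} (e_{j_s^{(t)}}, e_{j_{s+1}^{(t)}})$, the sum being taken over all length-$k$ paths from $p_t$ to $q_t$. By the definition given just before the statement, this last quantity is exactly the $k$-weight of the edge $(p_t, q_t)$. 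Hence \eqref{Eq:GenDet} equals the outer sum of (the product of the $k$-weights of the $n$ edges) times $\name{vol}(e_{p_1}, \dots, e_{p_n}) \name{vol}(e_{q_1}, \dots, e_{q_n})$.

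Next I would match the endpoint configurations with DOOMBs. A term survives only when both oriented volumes are nonzero, which forces $p_1, \dots, p_n$ to be pairwise distinct and $q_1, \dots, q_n$ to be pairwise distinct; consequently in the directed graph with edges $(p_t, q_t)$ each vertex has at most one outgoing and at most one incoming edge, so every component is a chain or a cycle and the graph is a DOOMB, as noted before the statement. The constraint $q_1 < \dots < q_n$ merely fixes, for each DOOMB, the canonical labeling of its edges by increasing ending vertex (possible since distinct edges have distinct endpoints in a DOOMB), so that every DOOMB contributes exactly once.

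The one point that needs genuine verification — the rest being bookkeeping — is that the quantity attached to a DOOMB does not depend on the labeling of its edges, so the canonical ordering forced by \eqref{Eq:GenDet} really reproduces the unordered sum of the statement. The product of the $k$-weights is symmetric under permutations of the edges. Under a permutation $\pi \in S_n$ of the edges each oriented volume $\name{vol}(e_{p_1}, \dots, e_{p_n})$ and $\name{vol}(e_{q_1}, \dots, e_{q_n})$ is multiplied by $(-1)^\pi$, so their product, and hence the whole summand, is invariant. This shows the DOOMB contribution is well defined and equal to the corresponding term of \eqref{Eq:GenDet}, which completes the identification.
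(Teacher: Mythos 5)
Your proof is correct and follows essentially the same route as the paper: the paper also derives Theorem \ref{Th:GenDet}' directly from \eqref{Eq:GenDet} by drawing the graph on the endpoints $(j_1^{(i)}, j_k^{(i)})$, noting that nonvanishing of the volumes forces distinct starting and final vertices (hence a DOOMB), identifying the inner sum over interior vertices with the $k$-weight of each edge, and disposing of the edge-numbering issue exactly as you do (the paper's Remark \ref{Rm:IndepNum}, via cancellation of the signs $(-1)^\pi$ in the two volumes). Your write-up merely makes the paper's regrouping and labeling arguments more explicit; no new content or deviation.
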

}

 \begin{remark} \label{Rm:IndepNum}
To formulate Theorem \ref{Th:GenDet}' it is necessary to number the edges 
for every DOOMB involved, but the value of the corresponding term is 
independent of the numbering.
 \end{remark}

Formulations of Lemma \ref{Lm:MultiComm} and Theorem \ref{Th:GenDet} assume 
that $k \ge 2$. The propositions remain valid, nevertheless, together with 
their proofs, if $k = 1$, if one defines $u_i \bydef w_i$ for all $i$ and 
$P_w^{(1)} \bydef \sum_{i=1}^N w_i (I - s_i)$ ($I$ means the identity 
operator). More precisely, the following is true:

{\def \theoremName {Theorem \ref{Th:GenDet} for $k=1$}
 \begin{Theorem}
 \begin{equation} \label{Eq:NoCommut}
\det P_w^{(1)} = 2^n \sum_{1 \le i_1 < \dots < i_n \le N} w_{i_1} \dots 
w_{i_n} \name{vol}^2(e_{i_1}, \dots, e_{i_n}).
 \end{equation}
 \end{Theorem}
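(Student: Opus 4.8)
The plan is to mimic the wedge-product computation used to prove Theorem \ref{Th:GenDet}, specialized to $k=1$, where each multi-index collapses to a single index. First I would record the explicit form of the operator: since $s_i(v) = v - 2(e_i,v)e_i$, we have $(I-s_i)(v) = 2(e_i,v)e_i$, so that
\begin{equation*}
P_w^{(1)}(v) = 2\sum_{i=1}^N w_i (e_i,v) e_i.
\end{equation*}
In the notation of the general setup this is $2\sum_i w_i Q_{(e_i)}(v)$. The factor $2$ that for $k \ge 2$ is hidden inside $u$ via the $-2^k$ of Lemma \ref{Lm:MultiComm} is now explicit, because one sets $u_i = w_i$; this is precisely the source of the $2^n$ in the answer.

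Next I would evaluate the wedge product against the orthonormal basis. Expanding each factor gives
\begin{equation*}
P_w^{(1)}(f_1) \wedge \dots \wedge P_w^{(1)}(f_n) = 2^n \sum_{j^{(1)}, \dots, j^{(n)} = 1}^N w_{j^{(1)}} \dots w_{j^{(n)}} \prod_{p=1}^n (e_{j^{(p)}}, f_p)\, e_{j^{(1)}} \wedge \dots \wedge e_{j^{(n)}}.
\end{equation*}
As in the main proof, the wedge of the $e$'s vanishes unless the indices are pairwise distinct, so I would restrict the sum to $i_1 < \dots < i_n$ and reintroduce a sum over $\sigma \in S_n$ recording the order in which the indices appear, the sign $(-1)^\sigma$ coming out of reordering the wedge.

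The final step is to recognize two determinants. The permutation sum $\sum_{\sigma \in S_n}(-1)^\sigma \prod_p (e_{i_{\sigma(p)}}, f_p)$ is the determinant of the matrix $[(e_{i_a}, f_b)]$, i.e.\ the signed volume $\name{vol}(e_{i_1}, \dots, e_{i_n})$, while $e_{i_1} \wedge \dots \wedge e_{i_n}$ equals that same signed volume times $f_1 \wedge \dots \wedge f_n$. Their product is $\name{vol}^2(e_{i_1}, \dots, e_{i_n})$, so the sign ambiguity disappears. Reading off the coefficient of $f_1 \wedge \dots \wedge f_n$ yields $\det P_w^{(1)}$ and hence \eqref{Eq:NoCommut}.

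I expect no genuine obstacle beyond the bookkeeping; the proof is a degenerate case of the one already given. The one point that deserves care is confirming that the two volume factors really are the same signed determinant, so that they combine into a square rather than a product with residual sign, and that the prefactor $2^n$ is attributed correctly to the explicit $2$ in $I - s_i$ (one per wedge factor) and not accidentally folded a second time into the weights $w_i$.
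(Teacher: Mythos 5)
Your proof is correct and is essentially the paper's own argument: the paper handles $k=1$ by noting that the proof of Theorem \ref{Th:GenDet} goes through verbatim with $u_i \bydef w_i$, the explicit factor $2$ in $(I-s_i)(v) = 2(e_i,v)e_i$ (no longer absorbed into $u$ via the $-2^k$ of Lemma \ref{Lm:MultiComm}) yielding the prefactor $2^n$, exactly as in your wedge-product computation. Your closing check that the two volume factors are the same signed determinant, so they combine into $\name{vol}^2$ with no residual sign, is precisely the degenerate $j_1 = j_k$ case of the two volume factors in \eqref{Eq:GenDet}.
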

}

A reflection is an orthogonal operator, and it is an involution, so it is 
a symmetric operator. Therefore, a nested commutator of $k$ reflections is 
symmetric for $k$ odd and skew-symmetric for $k$ even, and the same is 
true for the operator $P_w^{(k)}$. Suppose that $k$ is even. In this case
$\det P_w^{(k)}$ may be nonzero only if $n \bydef \dim V$ is even, and 
$\det P_w^{(k)} = \name{Pf}^2 P_w^{(k)}$ where $\name{Pf}$ means the 
Pfaffian. 

Consider now the term in Theorem \ref{Th:GenDet}' corresponding to the 
DOOMB $\Gamma$. Take a connected component of $\Gamma$ and reverse the 
orientation of all the edges in it obtaining a new DOOMB $\Gamma'$. It 
follows from Corollary \ref{Cr:Reverse} that for $k$ even the weight of an 
edge changes its sign if the orientation of the edge is reversed. Therefore 
the contribution to the sum of $\Gamma'$ equals the contribution of 
$\Gamma$ multiplied by $(-1)^\ell$ where $\ell$ is the size of the 
component. If $\ell$ is odd, the contributions of $\Gamma$ and $\Gamma'$ 
cancel --- therefore, for $k$ even only DOOMBs with connected components of 
even size enter the sum. 

Recall that a directed graph on the vertex set $1, \dots, N$ is called a 
directed partial pair matching if no two its edges have a common vertex. 
Every DOOMB with $n$ edges all of whose connected components have even 
size can be uniquely decomposed into a union of two directed partial pair 
matchings of $n/2$ edges each. Vice versa, a union of two directed partial 
pair matchings is a DOOMB with connected components of even size. Thus for 
$k$ even the following statement is equivalent to Theorem \ref{Th:GenDet}:

{\def \theoremName {Theorem \ref{Th:GenDet} for $k$ even}
 \begin{Theorem}
If $k$ is even then a Pfaffian of $P_w^{(k)}$ is the sum of $k$-weights of 
all the directed partial pair matchings with $n/2$ edges and vertices $1, 
\dots, N$, multiplied by the volume of the parallelepiped spanned by the 
vectors $e_p$ where $p$ runs through all the vertices of the pair matching.
 \end{Theorem}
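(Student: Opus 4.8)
The plan is to compute the Pfaffian directly, in exact parallel with the computation of $\det P_w^{(k)}$ in the proof of Theorem~\ref{Th:GenDet}, but replacing the exterior power of the image of a basis by the top power of the bivector attached to the skew-symmetric operator $P_w^{(k)}$. Write $m \bydef n/2$. For a skew-symmetric operator $A$ on $V$ one has, with the standard sign convention, $\frac{1}{m!}\,\omega_A^{\wedge m} = \name{Pf}(A)\, f_1 \wedge \dots \wedge f_n$, where $\omega_A \bydef \sum_{i<j}(f_i, A f_j)\, f_i \wedge f_j \in \Lambda^2 V$ is the bivector of $A$. So I would first express $\omega_A$ for $A = P_w^{(k)}$ in terms of the vectors $e_i$, then expand the power and read off the coefficient of $f_1 \wedge \dots \wedge f_n$.

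For the first step, rewrite \eqref{Eq:Aw} as $P_w^{(k)} = \sum_{p,q=1}^N W_{pq}\,(e_q \otimes e_p^{*})$, where $e_q \otimes e_p^{*}$ denotes the rank-one operator $v \mapsto (e_p,v)\,e_q$ and
\[
W_{pq} \bydef \sum_{\substack{j_1, \dots, j_k\\ j_1 = p,\ j_k = q}} u_{j_1,\dots,j_k}\,(e_{j_1},e_{j_2})\cdots(e_{j_{k-1}},e_{j_k})
\]
is precisely the $k$-weight of the directed edge from $p$ to $q$. A rank-one operator $e_q \otimes e_p^{*}$ contributes the bivector $\tfrac12\, e_q \wedge e_p$, so $\omega_A = \tfrac12 \sum_{p,q} W_{pq}\, e_q \wedge e_p$. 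By Corollary~\ref{Cr:Reverse} (here the hypothesis that $k$ is even enters) $W_{qp} = -W_{pq}$, which is exactly the skew-symmetry making $\omega_A$ a genuine $2$-vector; combining the terms $(p,q)$ and $(q,p)$ gives $\omega_A = \sum_{p<q} W_{pq}\, e_q \wedge e_p$.

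The main computation is then to expand $\omega_A^{\wedge m}$ and extract the coefficient of $f_1 \wedge \dots \wedge f_n$. A monomial in the expansion picks $m$ unordered edges $\{p_i,q_i\}$; its $e$-wedge $\bigwedge_i (e_{q_i} \wedge e_{p_i})$ is a top form equal to $\name{vol}(e_{q_1}, e_{p_1}, \dots, e_{q_m}, e_{p_m})\, f_1 \wedge \dots \wedge f_n$, which vanishes unless the $2m = n$ endpoints are pairwise distinct --- that is, unless the $m$ edges form a partial pair matching on $n$ of the vertices $1, \dots, N$. The accompanying scalar $\prod_i W_{p_i q_i}$ is the $k$-weight of that matching, and the volume factor is the volume of the parallelepiped on its vertex-vectors. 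The factor $1/m!$ is absorbed by the $m!$ orderings of the chosen edges (reordering the size-$2$ blocks is an even permutation of the $n$ vectors, so the volume is unchanged), and reversing the orientation of a single edge multiplies $W_{p_iq_i}$ by $-1$ (skew-symmetry) and the volume by $-1$ (one transposition), leaving each term invariant; this is what allows the result to be stated as a sum over directed matchings with the orientation-compatible ordering of the volume. Assembling these identifications yields exactly the asserted formula for $\name{Pf} P_w^{(k)}$.

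I expect the genuine difficulty to be the sign and normalization bookkeeping rather than any new idea: pinning down the sign convention in $\frac1{m!}\omega_A^{\wedge m} = \name{Pf}(A)\,f_1\wedge\dots\wedge f_n$ so that the overall sign is $+1$, fixing once and for all the ordering of the $e$-vectors inside the volume (and hence the precise meaning of ``the volume $\dots$ where $p$ runs through all the vertices''), and accounting correctly for the $2^m$ orientations of each underlying matching. As a consistency check and to connect with the narrative preceding the statement, the same identity can be reached by squaring: the reformulation Theorem~\ref{Th:GenDet}$'$ together with the cancellation of odd components expresses $\det P_w^{(k)}$ as a sum over even-component DOOMBs, and the canonical alternating $2$-colouring of the edges of such a DOOMB splits it into two directed matchings $M_1, M_2$ whose vertex sets are the start- and end-vertex sets of the DOOMB, so that the DOOMB term factors as the product of the two matching terms; matching this with $\det = \name{Pf}^2$ recovers the formula up to sign. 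The direct bivector computation is preferable precisely because it avoids this square-root sign ambiguity.
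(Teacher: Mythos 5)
Your proposal is correct, and it takes a genuinely different route from the paper. The paper does not compute the Pfaffian directly: it notes that $P_w^{(k)}$ is skew-symmetric for $k$ even, so $\det P_w^{(k)} = \name{Pf}^2 P_w^{(k)}$, uses Corollary \ref{Cr:Reverse} to cancel, under orientation reversal, all DOOMBs in Theorem \ref{Th:GenDet}' having a component of odd size, and then identifies the surviving even-component DOOMBs with unions of two directed partial pair matchings --- i.e., precisely the ``squaring'' argument you relegate to a consistency check, with the square-root sign ambiguity you flag left implicit. Your direct expansion of $\frac{1}{m!}\,\omega_A^{\wedge m}$, with $\omega_A = \frac12\sum_{p,q} W_{pq}\, e_q \wedge e_p$ legitimized by the skewness $W_{qp} = -W_{pq}$ (Corollary \ref{Cr:Reverse} for $k$ even), is sound: nonvanishing monomials are exactly the partial pair matchings, $1/m!$ cancels the orderings of the edge blocks, and each term is invariant under reversing an edge. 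What your route buys is an unambiguous overall sign and an exact normalization: your computation shows each underlying matching contributes once (orientation being auxiliary data, as in Remark \ref{Rm:IndepNum}), whereas the paper's equivalence argument --- in particular the claim of ``unique'' decomposition, which actually involves a binary choice per component --- leaves this counting loose. What the paper's route buys is brevity and the structural reason odd components disappear. One cosmetic slip: a single rank-one operator $e_q \otimes e_p^{*}$ has no well-defined bivector of its own; one should antisymmetrize first, which is harmless since the total operator is skew.
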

}

\section{The $A_n$ case}\label{Sec:An}

In this section we consider the system of reflections of the Coxeter 
group $A_n$. This means $V = \{(x_0, \dots, x_n) \mid x_1 + 
\dots + x_n = 0\} \subset \Real^{n+1}$, $N = n(n+1)/2$, and the vectors 
$e_p$ are 
 \begin{equation}\label{Eq:DefEij}
e_{ij} \bydef (f_i - f_j)/\sqrt{2}, \quad 0 \le i < j \le n,
 \end{equation}
where $f_0, \dots, f_n$ is the standard orthonormal basis in $\Real^{n+1}$. 
It will be convenient to use notation \eqref{Eq:DefEij} also if $i > j$, so 
that $e_{ji} = -e_{ij}$. Consider a set of points numbered $0, \dots, n$; a 
vector $e_{ij}$ will be drawn then as an arrow joining points $i$ and $j$. 
For any set $u = \{e_{i_1 j_1}, \dots, e_{i_k j_k}\}$ we will denote by 
$G(u)$ a graph with vertices $0, \dots, n$ and directed edges $(i_1,j_1), 
\dots, (i_k,j_k)$.

The scalar product in $\Real^n$ is standard, so the scalar product of 
$e_{ij}$ is given by
 \begin{equation}\label{Eq:ScPr}
(e_{i_1,j_1}, e_{i_2,j_2}) =  \begin{cases}
$1$, &\text{if {
\setlength{\unitlength}{0.075in}
\begin{picture}(5.00,3.28)
\special{em:linewidth 0.014in}
\put(5.00,1.65){\special{em:moveto}}
\put(4.03,1.11){\special{em:lineto}}
\put(5.00,0.78){\special{em:moveto}}
\put(5.00,1.65){\special{em:lineto}}
\put(5.00,1.78){\special{em:moveto}}
\put(5.00,2.61){\special{em:lineto}}
\put(4.96,1.78){\special{em:moveto}}
\put(3.96,2.28){\special{em:lineto}}
\put(5.00,1.78){\special{em:moveto}}
\put(4.78,1.33){\special{em:lineto}}
\put(4.46,0.88){\special{em:lineto}}
\put(4.08,0.53){\special{em:lineto}}
\put(3.76,0.33){\special{em:lineto}}
\put(3.45,0.18){\special{em:lineto}}
\put(2.90,0.03){\special{em:lineto}}
\put(2.48,0.00){\special{em:lineto}}
\put(1.90,0.06){\special{em:lineto}}
\put(1.60,0.15){\special{em:lineto}}
\put(1.30,0.28){\special{em:lineto}}
\put(1.13,0.36){\special{em:lineto}}
\put(0.98,0.46){\special{em:lineto}}
\put(0.85,0.56){\special{em:lineto}}
\put(0.75,0.65){\special{em:lineto}}
\put(0.65,0.73){\special{em:lineto}}
\put(0.51,0.86){\special{em:lineto}}
\put(0.41,1.00){\special{em:lineto}}
\put(0.35,1.08){\special{em:lineto}}
\put(0.28,1.16){\special{em:lineto}}
\put(0.25,1.23){\special{em:lineto}}
\put(0.20,1.30){\special{em:lineto}}
\put(0.16,1.36){\special{em:lineto}}
\put(0.13,1.43){\special{em:lineto}}
\put(0.10,1.50){\special{em:lineto}}
\put(0.06,1.56){\special{em:lineto}}
\put(0.00,1.58){\special{em:lineto}}
\put(0.00,1.58){\special{em:moveto}}
\put(0.10,1.78){\special{em:lineto}}
\put(0.20,1.98){\special{em:lineto}}
\put(0.33,2.18){\special{em:lineto}}
\put(0.50,2.38){\special{em:lineto}}
\put(0.60,2.48){\special{em:lineto}}
\put(0.70,2.58){\special{em:lineto}}
\put(0.76,2.65){\special{em:lineto}}
\put(0.85,2.71){\special{em:lineto}}
\put(0.95,2.78){\special{em:lineto}}
\put(1.05,2.85){\special{em:lineto}}
\put(1.15,2.91){\special{em:lineto}}
\put(1.26,2.98){\special{em:lineto}}
\put(1.40,3.05){\special{em:lineto}}
\put(1.56,3.11){\special{em:lineto}}
\put(1.76,3.18){\special{em:lineto}}
\put(2.05,3.25){\special{em:lineto}}
\put(2.51,3.28){\special{em:lineto}}
\put(3.13,3.21){\special{em:lineto}}
\put(3.36,3.15){\special{em:lineto}}
\put(3.55,3.08){\special{em:lineto}}
\put(3.73,3.00){\special{em:lineto}}
\put(3.88,2.91){\special{em:lineto}}
\put(3.98,2.85){\special{em:lineto}}
\put(4.08,2.78){\special{em:lineto}}
\put(4.20,2.70){\special{em:lineto}}
\put(4.28,2.63){\special{em:lineto}}
\put(4.45,2.46){\special{em:lineto}}
\put(4.60,2.30){\special{em:lineto}}
\put(4.73,2.13){\special{em:lineto}}
\put(4.83,1.96){\special{em:lineto}}
\put(4.93,1.80){\special{em:lineto}}
\put(5.00,1.68){\special{em:lineto}}
\end{picture}
}};\\
$-1$, &\text{if {
\setlength{\unitlength}{0.075in}
\begin{picture}(5.00,3.28)
\special{em:linewidth 0.014in}
\put(0.00,1.61){\special{em:moveto}}
\put(0.76,1.11){\special{em:lineto}}
\put(0.00,0.81){\special{em:moveto}}
\put(0.00,1.61){\special{em:lineto}}
\put(5.00,1.78){\special{em:moveto}}
\put(5.00,2.61){\special{em:lineto}}
\put(4.96,1.78){\special{em:moveto}}
\put(3.96,2.28){\special{em:lineto}}
\put(5.00,1.78){\special{em:moveto}}
\put(4.78,1.33){\special{em:lineto}}
\put(4.46,0.88){\special{em:lineto}}
\put(4.08,0.53){\special{em:lineto}}
\put(3.76,0.33){\special{em:lineto}}
\put(3.45,0.18){\special{em:lineto}}
\put(2.90,0.03){\special{em:lineto}}
\put(2.48,0.00){\special{em:lineto}}
\put(1.90,0.06){\special{em:lineto}}
\put(1.60,0.15){\special{em:lineto}}
\put(1.30,0.28){\special{em:lineto}}
\put(1.13,0.36){\special{em:lineto}}
\put(0.98,0.46){\special{em:lineto}}
\put(0.85,0.56){\special{em:lineto}}
\put(0.75,0.65){\special{em:lineto}}
\put(0.65,0.73){\special{em:lineto}}
\put(0.51,0.86){\special{em:lineto}}
\put(0.41,1.00){\special{em:lineto}}
\put(0.35,1.08){\special{em:lineto}}
\put(0.28,1.16){\special{em:lineto}}
\put(0.25,1.23){\special{em:lineto}}
\put(0.20,1.30){\special{em:lineto}}
\put(0.16,1.36){\special{em:lineto}}
\put(0.13,1.43){\special{em:lineto}}
\put(0.10,1.50){\special{em:lineto}}
\put(0.06,1.56){\special{em:lineto}}
\put(0.00,1.58){\special{em:lineto}}
\put(0.00,1.58){\special{em:moveto}}
\put(0.10,1.78){\special{em:lineto}}
\put(0.20,1.98){\special{em:lineto}}
\put(0.33,2.18){\special{em:lineto}}
\put(0.50,2.38){\special{em:lineto}}
\put(0.60,2.48){\special{em:lineto}}
\put(0.70,2.58){\special{em:lineto}}
\put(0.76,2.65){\special{em:lineto}}
\put(0.85,2.71){\special{em:lineto}}
\put(0.95,2.78){\special{em:lineto}}
\put(1.05,2.85){\special{em:lineto}}
\put(1.15,2.91){\special{em:lineto}}
\put(1.26,2.98){\special{em:lineto}}
\put(1.40,3.05){\special{em:lineto}}
\put(1.56,3.11){\special{em:lineto}}
\put(1.76,3.18){\special{em:lineto}}
\put(2.05,3.25){\special{em:lineto}}
\put(2.51,3.28){\special{em:lineto}}
\put(3.13,3.21){\special{em:lineto}}
\put(3.36,3.15){\special{em:lineto}}
\put(3.55,3.08){\special{em:lineto}}
\put(3.73,3.00){\special{em:lineto}}
\put(3.88,2.91){\special{em:lineto}}
\put(3.98,2.85){\special{em:lineto}}
\put(4.08,2.78){\special{em:lineto}}
\put(4.20,2.70){\special{em:lineto}}
\put(4.28,2.63){\special{em:lineto}}
\put(4.45,2.46){\special{em:lineto}}
\put(4.60,2.30){\special{em:lineto}}
\put(4.73,2.13){\special{em:lineto}}
\put(4.83,1.96){\special{em:lineto}}
\put(4.93,1.80){\special{em:lineto}}
\put(5.00,1.68){\special{em:lineto}}
\end{picture}
}};\\
$1/2$, &\text{if {
\setlength{\unitlength}{0.075in}
\begin{picture}(3.33,3.33)
\special{em:linewidth 0.014in}
\put(3.33,3.33){\special{em:moveto}}
\put(2.96,2.73){\special{em:lineto}}
\put(2.53,3.33){\special{em:moveto}}
\put(3.33,3.33){\special{em:lineto}}
\put(3.33,0.00){\special{em:moveto}}
\put(2.63,0.00){\special{em:lineto}}
\put(3.03,0.60){\special{em:moveto}}
\put(3.33,0.00){\special{em:lineto}}
\put(0.00,1.66){\special{em:moveto}}
\put(3.33,0.00){\special{em:lineto}}
\put(3.33,3.33){\special{em:moveto}}
\put(0.00,1.66){\special{em:lineto}}
\end{picture}
} or {
\setlength{\unitlength}{0.075in}
\begin{picture}(3.33,3.36)
\special{em:linewidth 0.014in}
\put(3.33,1.70){\special{em:moveto}}
\put(3.00,2.26){\special{em:lineto}}
\put(3.33,1.70){\special{em:moveto}}
\put(2.90,1.13){\special{em:lineto}}
\put(2.53,1.70){\special{em:moveto}}
\put(3.33,1.70){\special{em:lineto}}
\put(3.33,1.70){\special{em:moveto}}
\put(0.00,0.00){\special{em:lineto}}
\put(0.00,3.36){\special{em:moveto}}
\put(3.33,1.70){\special{em:lineto}}
\end{picture}
}};\\
$-1/2$, &\text{if {
\setlength{\unitlength}{0.075in}
\begin{picture}(6.66,0.93)
\special{em:linewidth 0.014in}
\put(6.60,0.46){\special{em:moveto}}
\put(5.96,0.00){\special{em:lineto}}
\put(5.86,0.93){\special{em:moveto}}
\put(6.60,0.46){\special{em:lineto}}
\put(3.26,0.46){\special{em:moveto}}
\put(2.63,0.06){\special{em:lineto}}
\put(2.56,0.86){\special{em:moveto}}
\put(3.26,0.46){\special{em:lineto}}
\put(0.00,0.46){\special{em:moveto}}
\put(6.66,0.46){\special{em:lineto}}
\end{picture}
}}; \\
$0$, &\text{in all the other cases, that is, if {
\setlength{\unitlength}{0.075in}
\begin{picture}(3.33,2.40)
\special{em:linewidth 0.014in}
\put(3.30,0.43){\special{em:moveto}}
\put(2.86,0.00){\special{em:lineto}}
\put(2.80,0.83){\special{em:moveto}}
\put(3.30,0.43){\special{em:lineto}}
\put(3.33,2.06){\special{em:moveto}}
\put(2.76,1.70){\special{em:lineto}}
\put(2.70,2.40){\special{em:moveto}}
\put(3.33,2.06){\special{em:lineto}}
\put(3.33,0.40){\special{em:moveto}}
\put(0.00,0.40){\special{em:lineto}}
\put(0.00,2.06){\special{em:moveto}}
\put(3.33,2.06){\special{em:lineto}}
\end{picture}
}}
 \end{cases}
 \end{equation}

 \begin{lemma}\label{Lm:DetAn}
The determinant $\name{vol}(e_{i_1,j_1}, \dots, e_{i_n,j_n})$ is 
zero unless the graph \linebreak $G(e_{i_1,j_1}, \dots, e_{i_n,j_n})$ is a 
tree. If it is a tree then the volume is equal to $\pm\sqrt{(n+1)/2^n}$.
 \end{lemma}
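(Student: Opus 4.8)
The plan is to reduce the squared volume to a Gram determinant and then apply the Cauchy--Binet formula together with the unimodularity of the incidence matrix of a tree. First I would set $c_a \bydef f_{i_a} - f_{j_a} \in \Real^{n+1}$, so that $e_{i_a,j_a} = c_a/\sqrt2$ and each $c_a$ has zero coordinate sum, hence lies in $V$. Because all the vectors $e_{i_a,j_a}$ lie in $V$, their $n$-dimensional volume equals the square root of the determinant of their Gram matrix computed with the ambient inner product. Writing $C$ for the $(n+1)\times n$ matrix whose columns are $c_1,\dots,c_n$ --- this is exactly the signed incidence matrix of the directed graph $G(e_{i_1,j_1},\dots,e_{i_n,j_n})$ --- the Gram matrix of the $c_a$ is $C^TC$, and therefore
\[
\name{vol}^2(e_{i_1,j_1},\dots,e_{i_n,j_n}) = \det\bigl(\tfrac12 C^TC\bigr) = \frac{1}{2^n}\det(C^TC).
\]

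Next I would apply the Cauchy--Binet formula $\det(C^TC) = \sum_{S}(\det C_S)^2$, where $S$ ranges over all $n$-element subsets of the vertex set $\{0,\dots,n\}$ and $C_S$ is the $n\times n$ submatrix of $C$ on the rows indexed by $S$; equivalently $C_S$ is the reduced incidence matrix obtained by deleting the row of the single vertex not in $S$. The rank of the incidence matrix of a graph on $n+1$ vertices with $c$ connected components is $(n+1)-c$. A graph with exactly $n$ edges on $n+1$ vertices is connected if and only if it is a tree; if it is not a tree it is disconnected, so $c\ge 2$ and $\name{rank} C\le n-1$. In that case the columns $c_1,\dots,c_n$ are linearly dependent, $C^TC$ is singular, and the volume vanishes, which is the first assertion. (The case of a repeated edge is covered automatically, since then two columns of $C$ coincide.)

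If $G$ is a tree then $C$ has full column rank $n$, so every reduced incidence matrix $C_S$ is nonsingular. By total unimodularity of incidence matrices --- every square submatrix has determinant $0$ or $\pm1$ --- each $\det C_S$ then equals $\pm1$; since there are exactly $n+1$ choices of $S$ and all are nonzero for a tree, Cauchy--Binet gives $\det(C^TC) = n+1$. Hence $\name{vol}^2 = (n+1)/2^n$ and $\name{vol} = \pm\sqrt{(n+1)/2^n}$, as claimed.

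The main obstacle, and the only nonformal step, is the unimodularity input: deleting one vertex-row from the incidence matrix of a spanning tree leaves an $n\times n$ matrix of determinant $\pm1$. This can be proved on its own by induction on the number of edges, repeatedly stripping a leaf $v$ distinct from the deleted vertex; the unique edge at $v$ puts a single $\pm1$ entry in row $v$, and expanding the determinant along that row multiplies the determinant of the smaller tree by $\pm1$. Everything else in the argument is bookkeeping with Cauchy--Binet and elementary graph theory.
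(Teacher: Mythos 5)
Your proof is correct, but it takes a genuinely different route from the paper's. On the vanishing part the two arguments are essentially the same fact viewed from two sides: the paper exhibits an explicit linear dependence by summing the vectors along a cycle, while you invoke the rank formula $\name{rank} C = (n+1)-c$ for the incidence matrix. The real divergence is the tree case. The paper normalizes the tree by volume-preserving replacements $e_{qp} \mapsto e_{rp} = e_{qp} + e_{rq}$ (sliding pendant edges toward the root) until it becomes the star centered at $0$, and then evaluates the resulting tridiagonal Gram determinant via the recursion $\gamma_n = \gamma_{n-1} - \frac14\gamma_{n-2}$, giving $(n+1)/2^n$. You instead compute $\det(C^TC)$ in one stroke by Cauchy--Binet, reducing everything to the classical fact that every reduced incidence matrix of a tree has determinant $\pm1$, which your leaf-stripping induction supplies. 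Your route is the standard matrix-tree bookkeeping: it needs no normalization and treats all trees uniformly; what it does not give is the \emph{sign} of the volume, which the paper's sliding procedure is built to track --- the Remark following the lemma reuses exactly that procedure to compute $\name{sgn}\name{vol}(u)$, which is later needed in the Massbaum--Vaintrob section. Since the lemma itself asserts the value only up to sign, this costs you nothing here.

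One small inferential slip, easily repaired: ``$C$ has full column rank $n$, so every reduced incidence matrix $C_S$ is nonsingular'' is not a valid implication for an arbitrary $(n+1)\times n$ matrix of rank $n$ (delete the wrong row of $\bigl(\begin{smallmatrix}1&0\\0&1\\0&0\end{smallmatrix}\bigr)$). It is rescued by the special feature that the rows of an incidence matrix sum to zero, so any $n$ of the $n+1$ rows span the row space and are therefore independent once the rank is $n$; alternatively, your leaf-stripping induction already proves $\det C_S = \pm1$ for \emph{every} $S$ directly, which makes the offending sentence redundant --- just lead with the induction.
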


 \begin{proof}
Denote $u = (e_{i_1,j_1}, \dots, e_{i_n,j_n})$. If $G(u)$ is not a tree, 
then it contains at least one cycle. Without loss of generality the cycle 
is formed by the vertices $i_1, \dots, i_k$ and the edges $(i_1, i_2), 
\dots, (i_k,i_1)$. Then $e_{i_1 i_2} + \dots + e_{i_k i_1} = 0$, and $u$ is 
linearly dependent, so that the volume is zero.

Let now $G(u)$ be a tree; the vertex $0$ will serve as its root. Changing 
a direction of an edge $(i,j) \in G(u)$ means replacement $e_{ij} \mapsto 
-e_{ij}$. So, preserving $\name{vol}(u)$ up to a sign one may assume that 
all the edges of $G(u)$ are directed from the root outwards. 

Let $p$ be a hanging vertex of $G(u)$ with the parent $q$ and the 
grandparent $r$. Replacement of the edge $(q,p)$ by $(r,p)$ means 
replacement of the vector $\pm e_{qp} \in u$ by $e_{rp} = e_{qp} + e_{rq}$, 
obtaining a new set $u'$. Since $e_{rq} \in u$, one has $\name{vol}(u') = 
\name{vol}(u)$. Doing like this several times one obtains a tree where the 
root $0$ is connected with the $n$ other vertices $1, \dots, n$. The square 
$\gamma_n \bydef \name{vol}^2(u)$ is the Gram determinant; by 
\eqref{Eq:ScPr} for such tree $\gamma_n$ is the determinant of the $n 
\times n$-matrix with $1$s in the main diagonal, $1/2$'s one line above and 
one line below it, and zeros in the other places. Expanding the determinant 
along the first row one obtains $\gamma_n = \gamma_{n-1} - \frac14 \cdot 
\gamma_{n-2}$, hence $\gamma_n = (n+1)/2^n$ by induction.
 \end{proof}

 \begin{Remark}
The procedure described in the proof allows to determine the sign of the 
volume as well. Namely, take the vertex number $0$ as a root of the tree 
and relate to every edge of the tree its outer endpoint. This puts the 
edges into a one-to-one correspondence with the vertices $1, \dots, n$. The 
ordering of the vectors $e_{i_1 j_1}, \dots, e_{i_n j_n}$ determines the 
ordering of the vertices $1, \dots, n$, that is, a permutation of $1, 
\dots, n$. A direct computation shows that the sign of $\name{vol}(u)$ is 
equal to $\eps_1 \eps_2$ where $\eps_1$ is the sign of this permutation and 
$\eps_2$ is the parity of the number of edges in $G(u)$ looking {\em 
towards} the root.
 \end{Remark}

Here are two particular cases of Theorem \ref{Th:GenDet} for the $A_n$ 
system of roots:

\subsection{$k=1$: a matrix-tree theorem}\label{SSec:AnK1}

In view of Lemma \ref{Lm:DetAn}, Theorem \ref{Th:GenDet}' (for $k=1$) gives 
for the case considered: $\det P_w^{(1)} = (n+1) \sum w_{i_1 j_1} \dots 
w_{i_n j_n}$ where the sum is taken over all sets $\{(i_1, j_1), \dots, 
(i_n,j_n)\}$ of pairs such that the corresponding graph is a tree. 

Kirchhoff's (or Laplacian) matrix $L_w$ is defined as the $(n+1) \times 
(n+1)$-matrix with the entries $(L_w)_{ij} = -w_{ij}$ when $i \ne j$, and 
$(L_w)_{ii} = \sum_{j \ne i} w_{ij}$. $L_w$ is the matrix of the operator 
$-P_w^{(1)}$ in the standard basis $f_0, \dots, f_n \in \Real^{n+1}$. The 
matrix $L_w$ is degenerate: $L_w(f_0 + \dots + f_n) = 0$; therefore $L_w = 
L_w R$ where $R$ is the orthogonal projection to the subspace $V \subset 
\Real^{n+1}$; explicitly $R f_i = f_i - \frac{1}{n+1}(f_0 + \dots + f_n)$.

For any $i = 0, \dots, n$ define the operator $M_i$ by $M_i(f_i) = 0$ and 
$M_i(f_j) = f_j$ for $j \ne i$. $M_i$ is the orthogonal projection to the 
subspace $W_i \bydef \langle f_0, \dots, \widehat{f_i}, \dots, f_n \rangle 
\subset \Real^{n+1}$. Then 
$\left.M_i\right|_{V \to M_i} P_w^{(1)} \left.R\right|_{W_i \to V}$ is the 
operator $W_i \to W_i$ whose matrix in the basis $f_0, \dots, 
\widehat{f_i}, \dots, f_n$ is the $n \times n$-submatrix of $L_w$ 
obtained by deletion of the $i$-th row and the $i$-th column. Hence, the 
determinant of the submatrix (the principal minor of $L_w$) is equal to 
$\det \left.M_k R\right|_{W_k} \cdot \det P_w^{(1)}$. The matrix of the 
operator $\left.M_k R\right|_{W_k}$ in the same basis is $I - 
\frac{1}{n+1}U$ where the matrix $U$ is defined as $U_{ij} = 1$ for all $i, 
j$. So, the rank of $U$ is $1$, and $Uf = nf$ where $f = f_0 + \dots + 
\widehat{f_k} + \dots + f_n$. Therefore, the characteristic polynomial of 
$\frac{1}{n+1}U$ is $\chi(t) = t^{n-1}(t-\frac{n}{n+1})$, and hence $\det 
(I - \frac{1}{n+1}U) = \chi(1) = 1/(n+1)$. This proves the classical {\em 
matrix-tree theorem} (see \cite{Kirchhoff} and also \cite{Chaiken} for 
another proof):

 \begin{theorem}[matrix-tree theorem, \cite{Kirchhoff}]
A principal $n$-minor of the Kirchhoff's matrix is equal to the sum $\sum 
w_{i_1 j_1} \dots w_{i_n j_n}$ taken over all sets $\{(i_1, j_1), \dots, 
(i_n,j_n)\}$ representing edges of a tree with vertices $0, \dots, n$.
 \end{theorem}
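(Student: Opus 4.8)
The plan is to obtain the theorem from the general formula of Theorem~\ref{Th:GenDet}, specialised to the $A_n$ root vectors $e_{ij}$, and then to perform a short determinant computation that converts the determinant of the operator $P_w^{(1)}$ acting on $V$ into the principal $n$-minor of the Laplacian $L_w$ on $\Real^{n+1}$.

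First I would specialise the $k=1$ determinant formula \eqref{Eq:NoCommut} to the vectors $e_{ij}$. By Lemma~\ref{Lm:DetAn} the volume $\name{vol}(e_{i_1 j_1},\dots,e_{i_n j_n})$ is zero unless the corresponding graph is a tree, and equals $\pm\sqrt{(n+1)/2^n}$ when it is. Hence $\name{vol}^2 = (n+1)/2^n$ on trees and $0$ otherwise, and substituting this into \eqref{Eq:NoCommut} cancels the prefactor $2^n$, yielding $\det P_w^{(1)} = (n+1)\sum w_{i_1 j_1}\cdots w_{i_n j_n}$, the sum extended over all edge sets $\{(i_1,j_1),\dots,(i_n,j_n)\}$ whose graph is a tree on the vertices $0,\dots,n$. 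This is already the desired tree sum, but it carries a spurious factor $n+1$ and is packaged as the determinant of an operator on the $n$-dimensional space $V$ rather than as a minor of $L_w$.

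Next I would account for the passage from $V$ to the minor. Since $P_w^{(1)} = \sum_i w_i(I-s_i)$ kills $f_0+\dots+f_n$, one has $P_w^{(1)} = P_w^{(1)}R$, so its image lies in $V$, and its matrix in $f_0,\dots,f_n$ is $L_w$ (up to sign). Deleting the $k$-th row and column of $L_w$ corresponds to the endomorphism $M_k P_w^{(1)} R$ of $W_k$, which factors as the composite $W_k \xrightarrow{R} V \xrightarrow{P_w^{(1)}} V \xrightarrow{M_k} W_k$. Multiplicativity of the determinant along this factorisation expresses the minor as $\det(\left.M_k R\right|_{W_k})\cdot\det P_w^{(1)}$, with $\left.M_k R\right|_{W_k}$ an endomorphism of the single space $W_k$. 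Its matrix in $f_0,\dots,\widehat{f_k},\dots,f_n$ is $I-\frac{1}{n+1}U$ with $U$ the all-ones matrix; as $U$ has rank one with nonzero eigenvalue $n$, the determinant is $1-\frac{n}{n+1}=\frac{1}{n+1}$.

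Combining the two computations, the factor $n+1$ from the volume and the factor $1/(n+1)$ from the projection cancel, leaving the minor equal to $\sum w_{i_1 j_1}\cdots w_{i_n j_n}$ over trees, as claimed. The step I expect to carry the real content is the determinant factorisation: one must verify that $\left.R\right|_{W_k}\colon W_k\to V$ and $\left.M_k\right|_V\colon V\to W_k$ are genuine isomorphisms, so that the three-factor determinant identity is legitimate and basis-independent, and keep track of the sign relating $L_w$ to $P_w^{(1)}$ so that the tree sum emerges with the correct positive sign. Everything else is routine, and the clean final answer rests on the numerical coincidence that the $n+1$ produced by $\name{vol}^2$ in Lemma~\ref{Lm:DetAn} is exactly undone by the rank-one correction $1/(n+1)$.
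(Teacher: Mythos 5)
Your proposal is correct and takes essentially the same route as the paper: it specializes Theorem~\ref{Th:GenDet} for $k=1$ via Lemma~\ref{Lm:DetAn} to get $\det P_w^{(1)} = (n+1)\sum w_{i_1 j_1}\cdots w_{i_n j_n}$ over trees, then factors the principal minor as $\det\bigl(\left.M_k R\right|_{W_k}\bigr)\cdot\det P_w^{(1)}$ and evaluates the rank-one correction $\det\bigl(I-\tfrac{1}{n+1}U\bigr)=\tfrac{1}{n+1}$, exactly as in Section~\ref{SSec:AnK1}. Your sign hedge is in fact warranted: the matrix of $P_w^{(1)}$ in the standard basis is $L_w$ itself (the paper's ``$-P_w^{(1)}$'' appears to be a typo), so the tree sum emerges with the correct positive sign.
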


\subsection{$k=2$: Massbaum--Vaintrob theorem}\label{SSec:MV}

Consider now the operator $P_w^{(2)}$ for the $A_n$ system of roots $e_{ij} 
= (f_i - f_j)/\sqrt{2}$. The Pfaffian of $P_w^{(2)}$ may be nonzero only if 
$\dim V = n$ is even; hence $n \bydef 2m$.

Let $s_{pq}$ be a reflection in the hyperplane normal to $e_{pq}$, $p \ne 
q$; thus $s_{qp} = s_{pq}$. One has $[s_{ij}, s_{kl}] = 0$ if the $\{i,j\} 
\cap \{k,l\} = \emptyset$ or $\{i,j\} = \{k,l\}$, so the only nonzero terms 
in \eqref{Eq:DefP} are $w_{ij,jk}[s_{ij},s_{jk}] \bydef w_{ijk} M_{ijk}$. 
An immediate computation shows that $M_{ijk} = \mu_{ijk} - \mu_{ikj}$ where 
$\mu_{ijk}: \Real^{n+1} \to \Real^{n+1}$ is a $3$-cyclic permutation of the 
coordinates: it maps $f_i \mapsto f_j \mapsto f_k \mapsto f_i$ and leaves 
all the other $f_p$ invariant. Hence, the operators $M_{ijk}$ are fully 
skew-symmetric (change the sign when any two indices are exchanged), and one 
has $P_w^{(2)} = \sum_{1 \le i < j < k \le N} \lambda_{ijk} M_{ijk}$ where 
$\lambda_{ijk}$ is the result of alternation of the $w_{ijk}$: 
$\lambda_{ijk} = w_{ijk} - w_{ikj} - w_{jik} - w_{kji} + w_{jki} + 
w_{kij}$. The coefficient $\lambda_{ijk}$ is fully skew-symmetric, too. 

The matrix $T = (t_{pq})$, $0 \le p,q \le n$, of the operator $P_w^{(2)}$ 
in the standard basis $f_0, \dots, f_n$ is $t_{pq} = \sum_{r=1}^N 
\lambda_{pqr}$. The matrix is degenerate because it is skew-symmetric and 
$n+1 = 2m+1$ is odd; reasoning like in Section \ref{SSec:AnK1} one proves 
that the Pfaffian of any principal $n$-minor of $T$ is equal to 
$\sqrt{n+1}$ times the Pfaffian of the operator $P_w^{(2)}$ acting in the 
subspace $V \subset \Real^{n+1}$.

The right-hand side of \eqref{Eq:GenDet} contains the coefficients 
$u_{ij,kl} = w_{ij,kl} - w_{kl,ij}$, which are all zeros except, possibly, 
$u_{ij,jk} = w_{ij,jk} - w_{jk,ij} = w_{ijk} - w_{kji} \bydef u_{ijk}$; 
this implies $\lambda_{ijk} = u_{ijk} + u_{jki} + u_{kij}$. 

Theorem \ref{Th:GenDet} for $k=2$ and the $A_n$ system of roots looks as 
follows:
 \begin{equation*}
\name{Pf} P_w^{(2)} = \sum 
\prod_{p=1}^m u_{i_p j_p k_p} (e_{i_p j_p}, e_{j_p k_p}) \name{vol}(e_{i_1 
j_1}, e_{j_1 k_1}, \dots, e_{i_m j_m}, e_{j_m k_m});
 \end{equation*}
the sum is taken over the set of all $m$-edge partial pair matchings 
$\bigl((i_1,j_1), (j_1,k_1)\bigr), \linebreak\dots, \bigl((i_m,j_m), 
(j_m,k_m)\bigr)$ over the set of vertices $(i,j)$, $0 \le i < j \le n$. By 
Lemma \ref{Lm:DetAn} the volume term is nonzero if and only if $(i_1,j_1), 
(j_1, k_1), \dots, (i_m, j_m), (j_m, k_m)$ are edges of a tree; in 
particular, the pairs $(i_1,j_1), (j_1, k_1), \dots, (i_m, j_m), (j_m, 
k_m)$ are all distinct and therefore $\bigl((i_1,j_1), (j_1,k_1)\bigr), 
\dots, \bigl((i_m,j_m), (j_m,k_m)\bigr)$ is indeed a partial pair matching.

By Remark \ref{Rm:IndepNum} and skew symmetry of $u_{ijk}$ one can ensure 
that $i_s < j_s < k_s$ for every $s = 1, \dots, m$. According to 
\eqref{Eq:ScPr} the scalar products in the formula are then equal to 
$-1/2$, and the volume, according to Lemma \ref{Lm:DetAn}, is $\pm 
\sqrt{(n+1_/2^n}$; thus every term is equal to $(-1)^m \sqrt{n+1}/2^n 
\eps(i_1, \dots, k_m) \prod_{s=1}^m u_{i_s j_s k_s}$, where $\eps(i_1, 
\dots, k_m) \bydef \name{sgn} \name{vol}(e_{i_1 j_1}, e_{j_1 k_1}, \dots, 
e_{i_m j_m}, e_{j_m k_m}) = \pm1$ is the sign of the volume.

Let us draw for every term $\prod_{s=1}^m u_{i_s j_s k_s}$ a $3$-graph with 
vertices $0, \dots, n$ and $3$-edges $(i_s j_s k_s). 1 \le s \le m$. The 
sides $(i_s, j_s)$ and $(j_s, k_s)$ of all the edges (recall that $i_s < 
j_s < k_s$) form a tree, which is contractible (homotopy equivalent to a 
point). Since a triangle can be retracted onto a union of its two sides, 
the $3$-graph obtained is contractible, too, and therefore is a $3$-tree.

 \begin{theorem}[\cite{MV}] \label{Th:MV}
The Pfaffian of the principal $n \times n$-submatrix of the matrix $T$ is 
equal to the sum of the terms $\delta(i_1, \dots, k_m) \prod_{s=1}^m u_{i_s 
j_s k_s}$ where the sign $\delta(i_1, \dots, k_m) = \pm1$ is defined as 
follows. Number the $3$-edges from $1$ to $m$ and consider a product of the 
$3$-cycles $(i_1 j_1 k_1) \dots (i_m j_m k_m)$. This product is a cyclic 
permutation $(a_0 \dots a_n)$. The order of the vertices $a_0 \dots a_n$ 
inside the cycle defines a permutation $\sigma$ of $0, \dots, n$; then 
$\delta(i_1, \dots, k_m)$ is the parity of this permutation. 
 \end{theorem}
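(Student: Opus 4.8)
The plan is to start from the geometric formula already obtained for $\name{Pf} P_w^{(2)}$ and reduce the theorem to a purely combinatorial identity between two signs. Recall that for each $3$-tree (with sides $(i_s,j_s),(j_s,k_s)$ forming a spanning tree and with the normalization $i_s<j_s<k_s$) the corresponding term was evaluated to $(-1)^m\sqrt{n+1}/2^n$ times $\eps(i_1,\dots,k_m)\prod_s u_{i_sj_sk_s}$, where $\eps(i_1,\dots,k_m)=\name{sgn}\name{vol}(e_{i_1j_1},e_{j_1k_1},\dots,e_{i_mj_m},e_{j_mk_m})$, and that the Pfaffian of a principal $n$-minor of $T$ is $\sqrt{n+1}$ times $\name{Pf}$ of $P_w^{(2)}$ acting on $V$. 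Collecting these scalar prefactors into the global normalization constant (which is the one implicit in the matrix $T$ of \cite{MV}), the whole content of the statement is the assertion that the \emph{geometric} sign $\eps(i_1,\dots,k_m)$ equals the \emph{combinatorial} sign $\delta(i_1,\dots,k_m)$ read off from the product of $3$-cycles.

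First I would verify that the product $(i_1j_1k_1)\cdots(i_mj_mk_m)$ is genuinely a single $(n+1)$-cycle, so that $\delta$ is even defined. Writing each factor as $(i_sj_sk_s)=(i_sj_s)(j_sk_s)$ turns the product into a product of the $n=2m$ transpositions attached to the edges of the tree of triangle sides; by the classical fact that a product of transpositions whose graph is a tree on $n+1$ vertices is always a single $(n+1)$-cycle, independently of the order of the factors, the claim follows from connectedness of the $3$-tree. Moreover, since $n$ is even, cyclically shifting the starting vertex $a_0$ multiplies the reading permutation by an even cycle, so the parity $\delta$ is independent of where the cycle is cut; and by Remark \ref{Rm:IndepNum} it is independent of the numbering of the $3$-edges. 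Thus both $\eps$ and $\delta$ are functions of the $3$-tree alone.

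Next I would make $\eps$ explicit by means of the Remark following Lemma \ref{Lm:DetAn}: taking $0$ as the root, each edge is labelled by its endpoint farther from the root, which identifies the edge set with $\{1,\dots,n\}$, and then $\eps=\eps_1\eps_2$, where $\eps_1$ is the sign of the permutation of $\{1,\dots,n\}$ induced by the chosen ordering of the vectors $e_{i_1j_1},e_{j_1k_1},\dots$ and $\eps_2$ is the parity of the number of edges oriented towards the root. The remaining task is to match $\eps_1\eps_2$ with the parity of the cycle-reading permutation $\sigma$ attached to $\delta$.

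I expect this matching to be cleanest by induction on $m$. The base case $m=1$ is a single triangle, for which both signs are immediately $+1$. For the step I would delete a \emph{leaf} triangle of the $3$-tree, i.e.\ one glued to the rest at a single vertex; this removes two vertices together with the two transpositions of one $3$-cycle and leaves a $3$-tree with $m-1$ triangles. I would then track separately the effect on each side: for $\delta$, splicing one $3$-cycle out of the long cycle deletes its two private points and shortens the cycle by two; for $\eps$, removing the leaf produces a controlled permutation of the edge labels, affecting $\eps_1$, together with a fixed change in the towards-root count, affecting $\eps_2$. The hard part will be showing that these two changes agree at the level of parity, because $\delta$ and $\eps$ rest on genuinely different choices --- a numbering of $3$-edges on one side, a root and a vector-ordering on the other --- and one must show that, after passing to signs, all of these choices wash out consistently (in agreement with Corollary \ref{Cr:Reverse} and Remark \ref{Rm:IndepNum}). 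Once the inductive identity $\eps=\delta$ is established, substituting it into the assembled formula yields exactly the claimed expression for the Pfaffian of the principal $n$-minor of $T$.
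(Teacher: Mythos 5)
Your overall route coincides with the paper's: pass from the principal minor of $T$ to $\name{Pf}\,P_w^{(2)}$, reduce the theorem to the sign identity $\eps(i_1,\dots,k_m)=\delta(i_1,\dots,k_m)$, and prove that identity by induction on $m$ by deleting a dangling $3$-edge. Your preliminary observations are sound and in one respect more careful than the paper: the product of the $3$-cycles is indeed a single $(n+1)$-cycle because $(i_sj_sk_s)=(i_sj_s)(j_sk_s)$ turns it into a product of the $n$ transpositions of a spanning tree, and $\delta$ is insensitive to where the cycle is cut since a cyclic shift of $n+1$ letters with $n$ even is an even permutation. But the proposal has two genuine gaps. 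First, the inductive step --- which is the entire content of the lemma --- is announced rather than proved: you write that ``the hard part will be showing that these two changes agree at the level of parity'' and stop there. That parity match is precisely the computation the paper performs: after arranging that the dangling edge is numbered $m$ and hangs on its smallest vertex $i$, deleting it removes the two last vectors from the volume together with the $j_m$-th and $k_m$-th columns, giving $\eps(i_1,\dots,k_{m-1})=(-1)^{j_m+k_m-1}\eps(i_1,\dots,k_m)$, while deleting the letters $j_m$, $k_m$ from the cycle reading decreases the inversion count by $j_m+k_m-1$, giving the identical factor for $\delta$. Note also that positioning the dangling edge requires, besides renumbering invariance, the invariance of both signs under the cyclic shift $i_s\mapsto j_s\mapsto k_s\mapsto i_s$ within a single $3$-edge (on the $\eps$ side via $e_{k_si_s}=-e_{i_sj_s}-e_{j_sk_s}$, which leaves the volume unchanged); this step is absent from your plan.

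Second, your appeal to Remark~\ref{Rm:IndepNum} to conclude that $\delta$ is independent of the numbering of the $3$-edges is unfounded: that remark concerns the terms of Theorem~\ref{Th:GenDet}', i.e.\ the $\eps$-side quantities, and says nothing about the purely combinatorial sign $\delta$, whose defining product of noncommuting $3$-cycles genuinely depends on the order of the factors. The paper proves this invariance separately, using the second (disk-embedding) description of $\delta$: exchanging two consecutive $3$-edges sharing a vertex swaps two blocks of the reading permutation whose lengths are even (every $3$-tree has an odd number of vertices, so each of the two subtrees hanging at the common vertex contributes an even number of non-root vertices), hence an even permutation. Without some such argument you have not shown that $\delta$ is a function of the $3$-tree alone, and your induction cannot even be set up, since you silently renumber so that the leaf triangle is the $m$-th edge. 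Your idea of evaluating $\eps$ through the $\eps_1\eps_2$ formula of the remark following Lemma~\ref{Lm:DetAn} is a viable alternative to the paper's direct manipulation of the volume, but as written it too remains a promissory note rather than a proof.
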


Another definition of $\delta(i_1, \dots, k_m)$ (also found in \cite{MV}) 
is the following. Number, again, the $3$-edges; so, for every vertex the 
$3$-edges containing it are linearly ordered. This defines an embedding, up 
to a homotopy, of the $3$-graph into a disk such that the vertices lie in 
its boundary, and for every vertex the linear ordering of the $3$-edges 
containing it corresponds to their ordering ``left to right''. Let 
$\sigma(i_1, \dots, k_m) = (c_0, \dots, c_n)$ be the vertices of the graph 
listed counterclockwise. Then $\sigma$ is a permutation of $0, \dots, n$, 
and $\delta(i_1, \dots, k_m) = \name{sgn} \sigma(i_1, \dots, k_m)$.

To derive Theorem \ref{Th:MV} from Theorem \ref{Th:GenDet} one should prove 

 \begin{lemma}
$\delta(i_1, \dots, k_m) = \eps(i_1, \dots, k_m)$
 \end{lemma}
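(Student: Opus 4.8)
The plan is to strip the comparison down to a purely combinatorial identity about the rooted tree underlying the $3$-graph. First I would rewrite each $3$-cycle as a product of two transpositions, $(i_s j_s k_s)=(i_s j_s)(j_s k_s)$, so that the product defining $\delta$ becomes $\pi=(i_1 j_1)(j_1 k_1)\cdots(i_m j_m)(j_m k_m)$, a product of $2m=n$ transpositions, one for each edge of the tree $G=G(e_{i_1 j_1},\dots,e_{j_m k_m})$. Crucially, the left-to-right order of these transpositions is exactly the order in which the corresponding root vectors appear inside $\name{vol}(e_{i_1 j_1},e_{j_1 k_1},\dots,e_{i_m j_m},e_{j_m k_m})$, and the orientation of each transposition's edge matches the orientation of the corresponding root vector. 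Writing $\pi=(0\,a_1\cdots a_n)$ and $\sigma(l)=a_l$, we have $\delta=\name{sgn}\,\sigma$ by definition; and by the Remark following Lemma~\ref{Lm:DetAn} we have $\eps=\eps_1\eps_2$, where $\eps_1$ is the sign of the permutation that the ordering of the root vectors induces on the outer endpoints of the edges (relative to the root $0$) and $\eps_2$ is the parity of the number of edges oriented towards $0$. Thus the statement reduces to the combinatorial identity $\name{sgn}\,\sigma=\eps_1\eps_2$. I would emphasize at the outset that this identity genuinely uses the constraint $i_s<j_s<k_s$ (equivalently, that the middle vertex of every tree-path coming from a $3$-edge is the median of its three labels): for a general tree with an arbitrary edge order and orientation the two sides need not agree, as one already sees for the star $e_{01},\dots,e_{0n}$, where $\name{sgn}\,\sigma$ oscillates with $n$ while $\eps_1\eps_2=+1$.

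I would prove $\name{sgn}\,\sigma=\eps_1\eps_2$ by induction on $m$. The base case $m=1$ forces the unique admissible $3$-edge $(0,1,2)$, for which $\pi=(01)(12)=(0\,1\,2)$ gives $\sigma=\name{id}$ and $\delta=+1$, while the tree $0-1-2$ rooted at $0$ has $\eps_1=\eps_2=+1$, so both sides equal $+1$. For the inductive step I would use that the $3$-graph is a $3$-tree, so its dual tree of triangles has a leaf: there is a triangle $\Delta$ glued to the rest of the graph at a single vertex $v$, whose other two vertices occur in no other $3$-edge. Removing $\Delta$ deletes two tree-edges and two vertices and leaves an admissible $3$-tree on the remaining $n-1$ vertices, to which the inductive hypothesis applies. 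Since $\eps$ is unchanged when the $3$-edges are renumbered (each block consists of two vectors, so permuting blocks is an even permutation of the root vectors) and, by Remark~\ref{Rm:IndepNum}, the whole term is numbering-independent, I may assume $\Delta$ is the last $3$-edge; then $\pi=\pi'\cdot(i_m j_m)(j_m k_m)$, where $\pi'$ is the cycle of the reduced configuration. On the permutation side, right-multiplying by these two transpositions splices the two deleted vertices into the $(n-1)$-cycle $\pi'$ at determined positions, changing $\name{sgn}\,\sigma$ by an explicit factor. On the volume side, the two deleted vertices are (according to whether $v$ is the median or an extreme label of $\Delta$) either two leaves of $G$ or a leaf and an adjacent degree-two vertex; in either case the leaf-reduction used in the proof of Lemma~\ref{Lm:DetAn} expresses $\eps_1\eps_2$ through $\eps_1'\eps_2'$ times an explicit factor coming from extracting the two corresponding outer-endpoint entries of $\eps_1$ and from counting the deleted towards-root edges in $\eps_2$.

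The main obstacle is matching these two explicit factors. The difficulty is that the position at which a vertex is spliced into the cycle $\pi'$ is governed by the dynamics of the transposition product, whereas the sign changes of $\eps_1$ and $\eps_2$ are governed by the static combinatorics of the rooted tree (outer endpoints and edge directions); reconciling them is exactly where the constraint $i_s<j_s<k_s$ must be fed in, and it has to be checked separately in the cases distinguished by the position of the gluing vertex $v$ among the labels $i_m<j_m<k_m$. A useful cross-check, and possibly a cleaner route, is the second, ribbon-graph description of $\delta$ recalled after Theorem~\ref{Th:MV}: ordering the $3$-edges at each vertex embeds the $3$-tree in a disk with its vertices on the boundary, and reading them counterclockwise is precisely a choice of orientation. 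One expects this boundary orientation to coincide with the orientation of the frame $e_{i_1 j_1},e_{j_1 k_1},\dots,e_{i_m j_m},e_{j_m k_m}$, which is exactly what $\eps=\name{sgn}\,\name{vol}$ records; establishing that correspondence directly would yield $\delta=\eps$ without the positional bookkeeping, at the cost of making precise the planarity statement underlying that second definition.
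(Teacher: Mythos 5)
Your skeleton---establish numbering-independence, move a dangling triangle to the last position, delete it, and induct on $m$---is the same as the paper's, but the proposal stops exactly where the content of the proof lies. You explicitly defer ``matching these two explicit factors'' as ``the main obstacle,'' yet that matching \emph{is} the inductive step. In the paper it is a concrete one-line computation on each side: after normalization, the deleted edge contributes the last two vectors of the frame, and since $j_m,k_m$ occur in no other triple, removing the two last rows and the $j_m$-th and $k_m$-th columns multiplies $\eps$ by $(-1)^{j_m+k_m-1}$; on the other side, erasing the letters $j_m,k_m$ from the cyclic word defining $\sigma$ decreases the inversion count by exactly $j_m+k_m-1$, so $\delta$ picks up the same factor. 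Nothing in your write-up performs this (or any equivalent) computation, and your closing suggestion---to identify the counterclockwise boundary order of the disk embedding directly with the orientation of the frame---is again a restatement of the lemma, not an argument. A second, enabling ingredient is also missing: the paper first proves that both $\eps$ and $\delta$ are invariant under the cyclic relabeling $(i_s,j_s,k_s)\mapsto(j_s,k_s,i_s)$ of a single $3$-edge (the volume is unchanged because $e_{k_s i_s}=-e_{i_s j_s}-e_{j_s k_s}$, and $\sigma$ is multiplied by the even $3$-cycle $(i_s j_s k_s)$). This lets one rotate any dangling triangle so that it hangs at its first-listed vertex, which collapses precisely the ``median vs.\ extreme gluing vertex'' case split that you concede you cannot resolve. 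It also shows that the condition $i_s<j_s<k_s$ is merely a normalization up to cyclic order, so your opening insistence that the identity ``genuinely uses'' it is misleading; your star example concerns path decompositions that do not arise from $3$-edges at all and has no bearing on the lemma.

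There is a further defective step: you justify renumbering the $3$-edges by citing Remark \ref{Rm:IndepNum}, but that remark concerns the terms of Theorem \ref{Th:GenDet}', which involve the $u$-weights and volumes and make no mention of $\delta$; it cannot give you numbering-independence of $\delta$, which is defined through the product of $3$-cycles in a \emph{chosen} numbering. The paper proves this invariance directly: swapping two consecutive $3$-edges either leaves the disk embedding unchanged (disjoint edges) or swaps the two subtrees hanging at the common vertex, and since every $3$-tree has an odd number of vertices, each subtree contributes an even number of letters to $\sigma$, so the swap is an even permutation. Your argument for the numbering-independence of $\eps$ (blocks of two vectors, hence even permutations of the frame) is correct, your base case is correct, and your reduction of $\eps$ to $\eps_1\eps_2$ via the Remark after Lemma \ref{Lm:DetAn} is legitimate though heavier than the paper's direct row-and-column bookkeeping; but as it stands the proposal establishes only the scaffolding around the key step, not the step itself.
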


 \begin{proof}
First, the lemma is true for a simplest $3$-tree with the edges $(123), 
(145), \dots, \linebreak (1,n-1,n)$ --- both signs are $+1$. 

Consider now an arbitrary sequence $i_1, \dots, k_m$, that is, an arbitrary 
$3$-tree with numbered edges. Prove first that both $\eps(i_1, \dots, k_m)$ 
and $\delta(i_1, \dots, k_m)$ do not depend on the choice of the numbering 
of the edges. It suffices to show that $\eps$ and $\delta$ stay the same 
when one exchanges the numbers of the two edges, the $s$-th and the 
$(s+1)$-th. So, in $\name{vol}(e_{i_1 j_1}, \dots, e_{j_m,k_m})$ one 
exchanges the vectors $e_{i_s,j_s}, e_{j_s,k_s}$ with $e_{i_{s+1},j_{s+1}}, 
e_{j_{s+1},k_{s+1}}$. Such an exchange is an even permutation, so 
$\eps(i_1, \dots, k_m) = \name{sgn} \name{vol}(e_{i_1 j_1}, \dots, 
e_{j_m,k_m})$ remains the same. Consider now $\delta(i_1, \dots, k_m)$. If 
the $s$-th and the $(s+1)$-th edge have no common vertices, the embedding 
of the $3$-tree described above does not change, and neither does $\delta$. 
If the edges have a common vertex $r$, the subtrees rooted at $r$ and 
containing the edges are swapped. Denote by $a_1. \dots, a_p$ the vertices 
of the first subtree (other than the root) and by $b_1, \dots, b_q$, the 
vertices of the second subtree. Then the fragments $a_1. \dots, a_p$ and 
$b_1, \dots, b_q$ in the permutation $\sigma(i_1, \dots, k_m)$ exchange 
their places. Every $3$-tree contains an odd number of vertices, so both 
$p$ and $q$ are even here, and the exchange is an even permutation, so that 
$\delta(i_1, \dots, k_m) = \name{sgn} \sigma(i_1, \dots, k_m)$ is 
preserved.

Let now $(i_s j_s k_s)$, $i_s < j_s < k_s$, be the $s$-th edge of the 
$3$-tree. Shift the numbers of the vertices cyclically: $i_s \mapsto j_s 
\mapsto k_s \mapsto i_s$ and prove that this will not influence $\eps$ and 
$\delta$. In $\name{vol}(e_{i_1 j_1}, \dots, e_{j_m k_m})$ one replaces 
$e_{i_s j_s}, e_{j_s k_s}$ by $e_{j_s, k_s}, e_{k_s i_s} = -e_{i_s j_s} - 
e_{j_s k_s}$. So the volume stays the same, and so does its sign, 
$\eps(i_1, \dots, k_m)$. The permutation $\sigma(i_1, \dots, k_m)$ is 
multiplied by the $3$-cycle $(i_s j_s k_s)$, which is even, and therefore 
$\delta(i_1, \dots, k_m)$ would not change either.

Take now a dangling edge $(ijk)$, $i < j < k$, of the $3$-tree. As we 
proved, one may suppose that this edge has the number $m$ and hangs on the 
vertex $i$. Delete the edge, obtaining a new tree $(i_1, \dots, k_{m-1})$ 
with $m-1$ edges and $n-2 = 2m-1$ vertices. In $\name{vol}(e_{i_1 j_1}, 
\dots, e_{j_m k_m})$ one deletes the two last vectors; indices of the other 
vectors do not contain $j_m$ and $k_m$. In the corresponding matrix ($\eps$ 
is the sign of its determinant) the two last rows and the $j_m$-th and the 
$k_m$-th column are deleted; so $\eps(i_1, \dots, k_{m-1}) = 
(-1)^{j_m+k_m-1}\eps(i_1, \dots, k_m)$. On the other hand, permutation 
$\sigma(i_1, \dots, k_{m-1})$ is obtained from $\sigma(i_1, \dots, k_m)$ by 
deletion of the numbers $j_m$ and $k_m$; so the number inversions is 
decreased by $j_m+k_m-1$, and $\delta(i_1, \dots, k_{m-1}) = 
(-1)^{j_m+k_m-1}\delta(i_1, \dots, k_m)$. 

Thus, the equality $\eps(i_1, \dots, k_m) = \delta(i_1, \dots, k_m)$ is 
proved by induction on $m$.
 \end{proof}

\section{The $D_n$ and $B_n$ cases}

The $D_n$ root system in the space $V = \Real^n$ contains vectors $e_{ij}^+ 
= (f_i + f_j)/\sqrt{2}$ and $e_{ij}^- = (f_i - f_j)/\sqrt{2}$, for all $1 
\le i < j \le n$; here $f_1, \dots, f_n \in \Real^n$ is the standard 
orthonormal basis. The $B_n$ root system contains the same vectors and also 
the vectors $f_i$ for all $i = 1, \dots, n$. Denote the corresponding 
reflections by $s_{ij}^+$, $s_{ij}^-$ and $s_i$, respectively. The $D_n$ 
root system is a subset of the $B_n$ root system, hence it is enough to 
consider the $B_n$.

To any set $u = \{u_1, \dots, u_k\}$ of $B_n$-roots we associate a graph 
$G(u)$ with the vertices $1, \dots, n$ and the following edges: for any 
$e_{pq}^- \in u$ the graph $G(u)$ has an edge marked ``$-$'' and directed 
from $p$ to $q$; for any $e_{pq}^+ \in u$ it contains an undirected edge 
joining $p$ and $q$ and marked ``$+$''; for any $f_p \in u$ it contains a 
loop attached to the vertex $p$.

All the vectors $e_{pq}^\sigma$ and $f_p$ have unit length; the other 
scalar products are: 
 \begin{equation}\label{Eq:ScPrDB}
 \begin{aligned}
(e_{i_1j_1}^-, e_{i_2j_2}^-) &\quad\text{is like in 
\eqref{Eq:ScPr}};\\
(e_{i_1j_1}^+, e_{i_2j_2}^+) &=  \begin{cases}
1, &\text{if {
\setlength{\unitlength}{0.075in}
\begin{picture}(5.00,4.85)
\put(2.60,0.00){{\setbox0=\hbox{$+$}\lower\ht0\box0}}
\put(2.06,4.83){{\setbox0=\hbox{$+$}\lower\ht0\box0}}
\special{em:linewidth 0.014in}
\put(5.00,1.33){\special{em:moveto}}
\put(5.00,2.20){\special{em:lineto}}
\put(5.00,2.33){\special{em:moveto}}
\put(4.78,1.88){\special{em:lineto}}
\put(4.46,1.43){\special{em:lineto}}
\put(4.08,1.08){\special{em:lineto}}
\put(3.76,0.88){\special{em:lineto}}
\put(3.45,0.73){\special{em:lineto}}
\put(2.90,0.58){\special{em:lineto}}
\put(2.48,0.55){\special{em:lineto}}
\put(1.90,0.61){\special{em:lineto}}
\put(1.60,0.70){\special{em:lineto}}
\put(1.30,0.83){\special{em:lineto}}
\put(1.13,0.91){\special{em:lineto}}
\put(0.98,1.01){\special{em:lineto}}
\put(0.85,1.11){\special{em:lineto}}
\put(0.75,1.20){\special{em:lineto}}
\put(0.65,1.28){\special{em:lineto}}
\put(0.51,1.41){\special{em:lineto}}
\put(0.41,1.55){\special{em:lineto}}
\put(0.35,1.63){\special{em:lineto}}
\put(0.28,1.71){\special{em:lineto}}
\put(0.25,1.78){\special{em:lineto}}
\put(0.20,1.85){\special{em:lineto}}
\put(0.16,1.91){\special{em:lineto}}
\put(0.13,1.98){\special{em:lineto}}
\put(0.10,2.05){\special{em:lineto}}
\put(0.06,2.11){\special{em:lineto}}
\put(0.00,2.13){\special{em:lineto}}
\put(0.00,2.13){\special{em:moveto}}
\put(0.10,2.33){\special{em:lineto}}
\put(0.20,2.53){\special{em:lineto}}
\put(0.33,2.73){\special{em:lineto}}
\put(0.50,2.93){\special{em:lineto}}
\put(0.60,3.03){\special{em:lineto}}
\put(0.70,3.13){\special{em:lineto}}
\put(0.76,3.20){\special{em:lineto}}
\put(0.85,3.26){\special{em:lineto}}
\put(0.95,3.33){\special{em:lineto}}
\put(1.05,3.40){\special{em:lineto}}
\put(1.15,3.46){\special{em:lineto}}
\put(1.26,3.53){\special{em:lineto}}
\put(1.40,3.60){\special{em:lineto}}
\put(1.56,3.66){\special{em:lineto}}
\put(1.76,3.73){\special{em:lineto}}
\put(2.05,3.80){\special{em:lineto}}
\put(2.51,3.83){\special{em:lineto}}
\put(3.13,3.76){\special{em:lineto}}
\put(3.36,3.70){\special{em:lineto}}
\put(3.55,3.63){\special{em:lineto}}
\put(3.73,3.55){\special{em:lineto}}
\put(3.88,3.46){\special{em:lineto}}
\put(3.98,3.40){\special{em:lineto}}
\put(4.08,3.33){\special{em:lineto}}
\put(4.20,3.25){\special{em:lineto}}
\put(4.28,3.18){\special{em:lineto}}
\put(4.45,3.01){\special{em:lineto}}
\put(4.60,2.85){\special{em:lineto}}
\put(4.73,2.68){\special{em:lineto}}
\put(4.83,2.51){\special{em:lineto}}
\put(4.93,2.35){\special{em:lineto}}
\put(5.00,2.23){\special{em:lineto}}
\end{picture}}},\\
1/2, &\text{if {
\setlength{\unitlength}{0.075in}
\begin{picture}(6.66,3.35)
\put(5.06,0.00){{\setbox0=\hbox{$+$}\lower\ht0\box0}}
\put(1.33,3.33){{\setbox0=\hbox{$+$}\lower\ht0\box0}}
\put(6.66,1.50){{\setbox0=\hbox{$\scriptstyle\bullet$}\kern-.4\wd0\lower.5\ht0\box0}}
\put(3.33,1.50){{\setbox0=\hbox{$\scriptstyle\bullet$}\kern-.4\wd0\lower.5\ht0\box0}}
\put(0.00,1.50){{\setbox0=\hbox{$\scriptstyle\bullet$}\kern-.4\wd0\lower.5\ht0\box0}}
\special{em:linewidth 0.014in}
\put(3.33,1.50){\special{em:moveto}}
\put(6.53,1.50){\special{em:lineto}}
\put(0.00,1.50){\special{em:moveto}}
\put(3.33,1.50){\special{em:lineto}}
\end{picture}
}}, \\
0, &\text{otherwise (i.e.\ if the edges have no common vertices)}.
 \end{cases}\\
(e_{i_1j_1}^+, e_{i_2j_2}^-) &=  \begin{cases}
1/2, &\text{if {
\setlength{\unitlength}{0.075in}
\begin{picture}(6.66,3.35)
\special{em:linewidth 0.014in}
\put(6.63,1.50){\special{em:moveto}}
\put(5.90,1.13){\special{em:lineto}}
\put(5.80,1.86){\special{em:moveto}}
\put(6.63,1.50){\special{em:lineto}}
\put(5.06,0.00){{\setbox0=\hbox{$-$}\lower\ht0\box0}}
\put(1.33,3.33){{\setbox0=\hbox{$+$}\lower\ht0\box0}}
\put(6.66,1.50){{\setbox0=\hbox{$\scriptstyle\bullet$}\kern-.4\wd0\lower.5\ht0\box0}}
\put(3.33,1.50){{\setbox0=\hbox{$\scriptstyle\bullet$}\kern-.4\wd0\lower.5\ht0\box0}}
\put(0.00,1.50){{\setbox0=\hbox{$\scriptstyle\bullet$}\kern-.4\wd0\lower.5\ht0\box0}}
\put(3.33,1.50){\special{em:moveto}}
\put(6.53,1.50){\special{em:lineto}}
\put(0.00,1.50){\special{em:moveto}}
\put(3.33,1.50){\special{em:lineto}}
\end{picture}
}},\\
&\text{but the final point does not},\\ 
-1/2, &\text{if {
\setlength{\unitlength}{0.075in}
\begin{picture}(6.53,3.35)
\special{em:linewidth 0.014in}
\put(3.30,1.50){\special{em:moveto}}
\put(4.23,1.10){\special{em:lineto}}
\put(4.20,2.00){\special{em:moveto}}
\put(3.30,1.50){\special{em:lineto}}
\put(5.06,0.00){{\setbox0=\hbox{$-$}\lower\ht0\box0}}
\put(1.33,3.33){{\setbox0=\hbox{$+$}\lower\ht0\box0}}
\put(3.33,1.50){{\setbox0=\hbox{$\scriptstyle\bullet$}\kern-.4\wd0\lower.5\ht0\box0}}
\put(0.00,1.50){{\setbox0=\hbox{$\scriptstyle\bullet$}\kern-.4\wd0\lower.5\ht0\box0}}
\put(3.33,1.50){\special{em:moveto}}
\put(6.53,1.50){\special{em:lineto}}
\put(0.00,1.50){\special{em:moveto}}
\put(3.33,1.50){\special{em:lineto}}
\end{picture}
}},\\ 
0, &\text{otherwise}.
 \end{cases}\\
(e_{ij}^+,f_p) &= \begin{cases}
1/\sqrt{2} &\text{if {
\setlength{\unitlength}{0.075in}
\begin{picture}(5.75,2.23)
\special{em:linewidth 0.014in}
\put(3.23,0.51){\special{em:moveto}}
\put(3.43,0.60){\special{em:lineto}}
\put(3.63,0.68){\special{em:lineto}}
\put(3.81,0.75){\special{em:lineto}}
\put(3.98,0.81){\special{em:lineto}}
\put(4.15,0.86){\special{em:lineto}}
\put(4.30,0.90){\special{em:lineto}}
\put(4.45,0.93){\special{em:lineto}}
\put(4.60,0.96){\special{em:lineto}}
\put(4.73,0.98){\special{em:lineto}}
\put(4.85,0.98){\special{em:lineto}}
\put(4.95,0.98){\special{em:lineto}}
\put(5.06,0.96){\special{em:lineto}}
\put(5.16,0.96){\special{em:lineto}}
\put(5.26,0.93){\special{em:lineto}}
\put(5.35,0.91){\special{em:lineto}}
\put(5.41,0.88){\special{em:lineto}}
\put(5.48,0.85){\special{em:lineto}}
\put(5.55,0.81){\special{em:lineto}}
\put(5.60,0.76){\special{em:lineto}}
\put(5.65,0.73){\special{em:lineto}}
\put(5.68,0.68){\special{em:lineto}}
\put(5.71,0.63){\special{em:lineto}}
\put(5.73,0.58){\special{em:lineto}}
\put(5.75,0.53){\special{em:lineto}}
\put(5.75,0.48){\special{em:lineto}}
\put(5.73,0.43){\special{em:lineto}}
\put(5.73,0.38){\special{em:lineto}}
\put(5.70,0.33){\special{em:lineto}}
\put(5.68,0.28){\special{em:lineto}}
\put(5.65,0.23){\special{em:lineto}}
\put(5.60,0.20){\special{em:lineto}}
\put(5.55,0.15){\special{em:lineto}}
\put(5.48,0.11){\special{em:lineto}}
\put(5.41,0.08){\special{em:lineto}}
\put(5.33,0.06){\special{em:lineto}}
\put(5.25,0.03){\special{em:lineto}}
\put(5.15,0.01){\special{em:lineto}}
\put(5.05,0.00){\special{em:lineto}}
\put(4.95,0.00){\special{em:lineto}}
\put(4.83,0.00){\special{em:lineto}}
\put(4.71,0.01){\special{em:lineto}}
\put(4.58,0.03){\special{em:lineto}}
\put(4.43,0.05){\special{em:lineto}}
\put(4.28,0.08){\special{em:lineto}}
\put(4.13,0.13){\special{em:lineto}}
\put(3.96,0.18){\special{em:lineto}}
\put(3.78,0.25){\special{em:lineto}}
\put(3.60,0.31){\special{em:lineto}}
\put(3.40,0.41){\special{em:lineto}}
\put(3.20,0.51){\special{em:lineto}}
\put(0.76,2.21){{\setbox0=\hbox{$+$}\lower\ht0\box0}}
\put(0.00,0.51){{\setbox0=\hbox{$\scriptstyle\bullet$}\kern-.4\wd0\lower.5\ht0\box0}}
\put(0.00,0.51){\special{em:moveto}}
\put(3.33,0.51){\special{em:lineto}}
\end{picture}}}\\
0 &\text{otherwise}
 \end{cases}\\
(e_{ij}^-,f_p) &= \begin{cases}
-1/\sqrt{2} &\text{if {
\setlength{\unitlength}{0.075in}
\begin{picture}(5.75,2.23)
\special{em:linewidth 0.014in}
\put(3.30,0.51){\special{em:moveto}}
\put(2.53,0.08){\special{em:lineto}}
\put(2.43,0.95){\special{em:moveto}}
\put(3.30,0.51){\special{em:lineto}}
\put(3.23,0.51){\special{em:moveto}}
\put(3.43,0.60){\special{em:lineto}}
\put(3.63,0.68){\special{em:lineto}}
\put(3.81,0.75){\special{em:lineto}}
\put(3.98,0.81){\special{em:lineto}}
\put(4.15,0.86){\special{em:lineto}}
\put(4.30,0.90){\special{em:lineto}}
\put(4.45,0.93){\special{em:lineto}}
\put(4.60,0.96){\special{em:lineto}}
\put(4.73,0.98){\special{em:lineto}}
\put(4.85,0.98){\special{em:lineto}}
\put(4.95,0.98){\special{em:lineto}}
\put(5.06,0.96){\special{em:lineto}}
\put(5.16,0.96){\special{em:lineto}}
\put(5.26,0.93){\special{em:lineto}}
\put(5.35,0.91){\special{em:lineto}}
\put(5.41,0.88){\special{em:lineto}}
\put(5.48,0.85){\special{em:lineto}}
\put(5.55,0.81){\special{em:lineto}}
\put(5.60,0.76){\special{em:lineto}}
\put(5.65,0.73){\special{em:lineto}}
\put(5.68,0.68){\special{em:lineto}}
\put(5.71,0.63){\special{em:lineto}}
\put(5.73,0.58){\special{em:lineto}}
\put(5.75,0.53){\special{em:lineto}}
\put(5.75,0.48){\special{em:lineto}}
\put(5.73,0.43){\special{em:lineto}}
\put(5.73,0.38){\special{em:lineto}}
\put(5.70,0.33){\special{em:lineto}}
\put(5.68,0.28){\special{em:lineto}}
\put(5.65,0.23){\special{em:lineto}}
\put(5.60,0.20){\special{em:lineto}}
\put(5.55,0.15){\special{em:lineto}}
\put(5.48,0.11){\special{em:lineto}}
\put(5.41,0.08){\special{em:lineto}}
\put(5.33,0.06){\special{em:lineto}}
\put(5.25,0.03){\special{em:lineto}}
\put(5.15,0.01){\special{em:lineto}}
\put(5.05,0.00){\special{em:lineto}}
\put(4.95,0.00){\special{em:lineto}}
\put(4.83,0.00){\special{em:lineto}}
\put(4.71,0.01){\special{em:lineto}}
\put(4.58,0.03){\special{em:lineto}}
\put(4.43,0.05){\special{em:lineto}}
\put(4.28,0.08){\special{em:lineto}}
\put(4.13,0.13){\special{em:lineto}}
\put(3.96,0.18){\special{em:lineto}}
\put(3.78,0.25){\special{em:lineto}}
\put(3.60,0.31){\special{em:lineto}}
\put(3.40,0.41){\special{em:lineto}}
\put(3.20,0.51){\special{em:lineto}}
\put(0.76,2.21){{\setbox0=\hbox{$-$}\lower\ht0\box0}}
\put(0.00,0.51){{\setbox0=\hbox{$\scriptstyle\bullet$}\kern-.4\wd0\lower.5\ht0\box0}}
\put(0.00,0.51){\special{em:moveto}}
\put(3.33,0.51){\special{em:lineto}}
\end{picture}
}}\\ 
1/\sqrt{2} &\text{if {
\setlength{\unitlength}{0.075in}
\begin{picture}(5.81,2.23)
\special{em:linewidth 0.014in}
\put(0.00,0.51){\special{em:moveto}}
\put(0.86,0.18){\special{em:lineto}}
\put(0.80,0.91){\special{em:moveto}}
\put(0.00,0.51){\special{em:lineto}}
\put(3.30,0.51){\special{em:moveto}}
\put(3.50,0.60){\special{em:lineto}}
\put(3.70,0.68){\special{em:lineto}}
\put(3.88,0.75){\special{em:lineto}}
\put(4.05,0.81){\special{em:lineto}}
\put(4.21,0.86){\special{em:lineto}}
\put(4.36,0.90){\special{em:lineto}}
\put(4.51,0.93){\special{em:lineto}}
\put(4.66,0.96){\special{em:lineto}}
\put(4.80,0.98){\special{em:lineto}}
\put(4.91,0.98){\special{em:lineto}}
\put(5.01,0.98){\special{em:lineto}}
\put(5.13,0.96){\special{em:lineto}}
\put(5.23,0.96){\special{em:lineto}}
\put(5.33,0.93){\special{em:lineto}}
\put(5.41,0.91){\special{em:lineto}}
\put(5.48,0.88){\special{em:lineto}}
\put(5.55,0.85){\special{em:lineto}}
\put(5.61,0.81){\special{em:lineto}}
\put(5.66,0.76){\special{em:lineto}}
\put(5.71,0.73){\special{em:lineto}}
\put(5.75,0.68){\special{em:lineto}}
\put(5.78,0.63){\special{em:lineto}}
\put(5.80,0.58){\special{em:lineto}}
\put(5.81,0.53){\special{em:lineto}}
\put(5.81,0.48){\special{em:lineto}}
\put(5.80,0.43){\special{em:lineto}}
\put(5.80,0.38){\special{em:lineto}}
\put(5.76,0.33){\special{em:lineto}}
\put(5.75,0.28){\special{em:lineto}}
\put(5.71,0.23){\special{em:lineto}}
\put(5.66,0.20){\special{em:lineto}}
\put(5.61,0.15){\special{em:lineto}}
\put(5.55,0.11){\special{em:lineto}}
\put(5.48,0.08){\special{em:lineto}}
\put(5.40,0.06){\special{em:lineto}}
\put(5.31,0.03){\special{em:lineto}}
\put(5.21,0.01){\special{em:lineto}}
\put(5.11,0.00){\special{em:lineto}}
\put(5.01,0.00){\special{em:lineto}}
\put(4.90,0.00){\special{em:lineto}}
\put(4.78,0.01){\special{em:lineto}}
\put(4.65,0.03){\special{em:lineto}}
\put(4.50,0.05){\special{em:lineto}}
\put(4.35,0.08){\special{em:lineto}}
\put(4.20,0.13){\special{em:lineto}}
\put(4.03,0.18){\special{em:lineto}}
\put(3.85,0.25){\special{em:lineto}}
\put(3.66,0.31){\special{em:lineto}}
\put(3.46,0.41){\special{em:lineto}}
\put(3.26,0.51){\special{em:lineto}}
\put(0.83,2.21){{\setbox0=\hbox{$-$}\lower\ht0\box0}}
\put(0.06,0.51){{\setbox0=\hbox{$\scriptstyle\bullet$}\kern-.4\wd0\lower.5\ht0\box0}}
\put(0.06,0.51){\special{em:moveto}}
\put(3.40,0.51){\special{em:lineto}}
\end{picture}
}}\\ 
0 &\text{otherwise}
 \end{cases}
 \end{aligned}
 \end{equation}

Consider a graph with $n$ vertices and $n$ edges, some of them directed 
(the ``$-$''-edges), some not (the ``$+$''-edges), loops allowed (and not 
directed, indeed). Such a graph will be called $B$-basic if any connected 
component of it contains exactly one cycle, and this cycle either is a loop 
or has an odd number of ``$+$''-edges in it. The following lemma explains 
the term:

 \begin{lemma}\label{Lm:DB}
Let $u = \{u_1, \dots, u_n\} \in \Real^n$ be a system of $B_n$-roots. The 
volume $\name{vol}(u_1, \dots, u_n)$ is nonzero (that is, $u$ is a basis in 
$\Real^n$) if and only if the graph $G(u)$ is $B$-basic. If $G(u)$ is 
$B$-basic, then the volume is equal to $\pm 2^{d-\ell/2-n/2}$ 
where $d$ is the number of connected components in $G(u)$ and $\ell$ is the 
number of loops in it.
 \end{lemma}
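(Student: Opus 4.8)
The plan is to exploit that roots coming from different connected components of $G(u)$ involve disjoint subsets of the basis $f_1, \dots, f_n$. Writing the $n$ vectors $u_1, \dots, u_n$ as the columns of an $n \times n$ matrix $M$ (in the basis $f_1, \dots, f_n$) and grouping both rows and columns according to the connected component, the matrix $M$ becomes block-diagonal, the block $M_C$ of a component $C$ having one row per vertex of $C$ and one column per edge of $C$. Hence $\name{vol}(u) = \prod_C |\det M_C|$, and it suffices to analyse one component at a time.

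First I would settle which graphs give a nonzero volume. If a component $C$ has $k_C$ vertices but $c_C$ edges with $c_C > k_C$, then its $c_C$ column vectors all lie in the $k_C$-dimensional coordinate subspace $\langle f_i : i \in C \rangle$, so they are linearly dependent and $\name{vol}(u) = 0$. Since $\sum_C k_C = \sum_C c_C = n$, a nonzero volume forces $c_C = k_C$ for every component, i.e.\ every component is unicyclic (contains exactly one cycle). This is the first half of the $B$-basic condition. Next, within a unicyclic component I would peel hanging vertices: a leaf $v$ is met by a single edge $e$, so the row of $M_C$ indexed by $f_v$ has exactly one nonzero entry, namely $\pm 1/\sqrt2$ in the column of $e$; expanding the determinant along that row gives $|\det M_C| = 2^{-1/2} |\det M_{C'}|$, where $C'$ is $C$ with $v$ and $e$ deleted and is again unicyclic with the same cycle. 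Iterating removes all tree edges and reduces $C$ either to a single vertex carrying a loop (contributing $|f_p| = 1$, whence $|\det M_C| = 2^{-(k_C-1)/2}$) or to its pure cycle.

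The heart of the argument is the pure-cycle determinant, and this is the step I expect to be the main obstacle. For a cycle on $c$ vertices one has $M_C = \tfrac{1}{\sqrt2} N$, where the column of each edge $(j, j+1)$ carries a $+1$ in row $j$ and an entry $\epsilon_j$ in row $j+1$, with $\epsilon_j = -1$ for a ``$-$''-edge and $\epsilon_j = +1$ for a ``$+$''-edge (the orientation chosen for a ``$+$''-edge only affects an overall sign, which is irrelevant to the volume). Because each column meets only rows $j$ and $j+1$, the rows and columns form a single cycle, which admits exactly two perfect matchings; hence the only permutations contributing to $\det N$ are the identity and the full cyclic shift, and a direct count gives $\det N = 1 + (-1)^{c-1}\prod_j \epsilon_j = 1 - (-1)^{m_+}$, where $m_+$ is the number of ``$+$''-edges in the cycle. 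Thus $\det N$ equals $2$ when $m_+$ is odd and $0$ when $m_+$ is even. This simultaneously proves the ``odd number of $+$-edges'' half of the $B$-basic criterion (an even-$+$ cycle forces the block, and hence the whole volume, to vanish) and yields the value $|\det M_C| = 2 \cdot 2^{-c/2}$ for a good non-loop cycle.

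Finally I would reassemble. A good non-loop component with $k_C$ vertices gives $|\det M_C| = 2^{-(k_C - c)/2} \cdot 2 \cdot 2^{-c/2} = 2^{1 - k_C/2}$, while a loop component gives $2^{-(k_C-1)/2} = 2^{1 - 1/2 - k_C/2}$; in both cases this reads $2^{1 - \ell_C/2 - k_C/2}$, where $\ell_C \in \{0,1\}$ records whether the unique cycle of $C$ is a loop. Multiplying over the $d$ components,
\[
\name{vol}(u) = \prod_C 2^{1 - \ell_C/2 - k_C/2} = 2^{\,d - \ell/2 - n/2},
\]
with $\ell = \sum_C \ell_C$ the total number of loops and $n = \sum_C k_C$, as claimed.
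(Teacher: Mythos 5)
Your proof is correct and complete, but it takes a genuinely different route from the paper's. The paper establishes the vanishing half by exhibiting explicit linear dependencies: a signed telescoping sum $e_{i_1 i_2}^{\sigma_1} + (-1)^{\tau_1} e_{i_2 i_3}^{\sigma_2} + \dots$ along a cycle with an even number of ``$+$''-edges collapses to zero, and when one component carries two odd-$+$ cycles the span is shown to contain $2f_{i_1}$, $2f_{j_1}$ and a path vector $f_{i_1} \pm f_{j_1}$; loops are then handled by an auxiliary-dimension trick, replacing $f_p$ by the pair $e_{p,n+1}^{\pm}$ via $f_p = \tfrac{1}{\sqrt2}(e_{p,n+1}^- + e_{p,n+1}^+)$. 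For the value, the paper performs volume-preserving column operations that slide an edge along an adjacent one ($e_{jk}^{\sigma_2} \mapsto e_{ik}^{\sigma}$), normalizing each connected component to a star centered at one vertex carrying either a loop or the pair $e_{ij_1}^+, e_{ij_1}^-$, and evaluates the two resulting determinants directly; only at the very end does it invoke orthogonality of components to multiply the answers. You invert this architecture: the block factorization over components comes first, unicyclicity falls out of the edge/vertex dimension count $c_C = k_C$ rather than from explicit dependence vectors, leaves are removed by Laplace expansion at a cost of $2^{-1/2}$ each, and --- the key divergence --- the pure cycle is evaluated by observing that a cycle admits exactly two edge-to-vertex perfect matchings, giving $\det N = 1 + (-1)^{c-1}\prod_j \epsilon_j = 1 - (-1)^{m_+}$. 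This single computation delivers both halves of the lemma at once (vanishing for even $m_+$, the factor $2$ for odd $m_+$), avoids the paper's lifting to $\Real^{n+1}$ for loops, and replaces its graph-rewriting normal form with a closed-form determinant; what the paper's route buys in exchange is that the degeneracies are witnessed by concrete vanishing combinations without any determinant expansion, and its edge-sliding move is the same device already used in the $A_n$ case (Lemma \ref{Lm:DetAn}), so the two proofs run in parallel. One small imprecision: your parenthetical about orientation affecting only an overall sign should apply to ``$-$''-edges traversed against the cycle as well as to ``$+$''-edges (either way a column is negated), but since you track only absolute values throughout, this is harmless.
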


 \begin{proof}
Suppose first that $G(u)$ contains no loops (that is, $u$ is a system of 
$D_n$-roots).

Let the edges $(i_1, i_2), \dots, (i_m, i_1)$, marked $\sigma_1, \dots, 
\sigma_m$ (where $\sigma_i = +$ or $-$), form a cycle. Denote by $\tau_p$ 
the number of $+$s among $\sigma_1, \dots, \sigma_p$. Then $e_{i_1 
i_2}^{\sigma_1} + (-1)^{\tau_1} e_{i_2 i_3}^{\sigma_2} + \dots + 
(-1)^{\tau_{m-1}} e_{i_m i_1}^{\sigma_m} = f_{i_1}(1 + (-1)^{\tau_m + 1})$. 
If $\tau_m$ is even (that is, the cycle contains an even number of 
``$+$''-edges), then the sum is zero and the vectors are linearly 
dependent, hence $\name{vol}(u) = 0$. 

Let a connected component of $G(u)$ contain two cycles, $(i_1, \dots, i_p)$ 
and $(j_1, \dots, j_q)$ joined by a path $k_1, k_2, \dots, k_r$ where $k_1 
= i_1$, $k_r = j_1$. If either cycle contains an even number of 
``$+$''-edges, $u$ is proved to be lineary dependent. Suppose that each 
cycle contain an odd number of ``$+$''-edges. Then the linear hull of $u$ 
contains vectors $2f_{i_1}$ and $2f_{j_1}$, as well as the vector $e_{k_1 
k_2}^{\sigma_1} + (-1)^{\tau_1} e_{k_2 k_3}^{\sigma_2} + \dots + 
(-1)^{\tau_{r-2}} e_{k_{r-1} k_r}^{\sigma_r} = f_{i_1} + (-1)^{\tau_{r-1}} 
f_{j_1}$ (notation as above) --- hence, $u$ is linearly dependent, too. So, 
if a connected component of a $G(u)$ contains two (or more) cycles, $u$ is 
linearly dependent and $\name{vol}(u) = 0$.

The graph $G(u)$ has $n$ vetices and $n$ edges. So if every connected 
component of $G(u)$ contains not more than one cycle, it contains exactly 
one. Thus, the first part of the lemma is proved for the $D_n$ case (no 
loops).

If $f_p \in u$ then $G(u)$ contains a loop. Add then a new dimension, and 
replace $f_p$ by $e_{p,n+1}^-$ and $e_{p,n+1}^+$, obtaining a new system 
$u'$. Since $f_p = \frac{1}{\sqrt2}(e_{p,n+1}^- + e_{p,n+1}^+)$, the system 
$u'$ is linearly dependent if and only if $u$ is. Thus, the first part of 
the lemma for the $B_n$ case follows from the same statement for the $D_n$.

Suppose now that $G(u)$ is connected and $B$-basic. Let $(ij)$ and $(jk)$ 
be two edges of $G(u)$ sharing a vertex $j$. Changing the signs of the 
vectors if necessary (this will affect the sign of $\name{vol}(u)$ but not 
its absolute value) we may suppose that the vectors corresponding to these 
edges are $a = e_{ij}^{\sigma_1} \in u$ and $b = e_{jk}^{\sigma_2} \in u$. 
Replace then the vector $b$ by $c = \pm b \pm a$ (the signs depend on 
$\sigma_1$ and $\sigma_2$ and are easily determined) one arrives to a 
system $u'$ containing $a$ and $c = e_{ik}^\sigma$ ($\sigma$ also depends 
on $\sigma_1$ and $\sigma_2$); it is clear that $\name{vol}(u') = \pm 
\name{vol}(u)$ and that $G(u')$ is still connected and $B$-basic. Applying 
this transformation several times one can replace $G(u)$ with a 
connected $B$-basic graph containing either a loop $f_i$ or a pair of edges 
$e_{ij_1}^+$, $e_{i j_1}^-$ (the exponents must be different because the 
graph is $B$-basic), and some edges $e_{i j_p}^{\sigma_p}$, $p = 2, \dots, 
n$, all $j_p$ being distinct. In the first case the volume is 
 \begin{equation*}
\det \frac{1}{\sqrt2}\left(\begin{array}{cccc} \sqrt2 & 1 & 1 & 
\dots\\ 0 & \pm1 & 0 & \dots\\ 0 & 0 & \pm1 & \dots \end{array} \right) = 
\pm 2^{1-1/2-n/2},
 \end{equation*}
and in the second case it is 
 \begin{equation*}
\det \frac{1}{\sqrt2}\left(\begin{array}{ccccc} 1 & 1 & 1 & 1 & \dots\\ 1 & 
-1 & 0 & 0 & \dots\\ 0 & 0 & \pm1 & 0 & \dots\\ 0 & 0 & 0 & \pm1 & \dots 
\end{array} \right) = \pm 2^{1-n/2},
 \end{equation*}
(easily checked using column expansion).

In the general case a $B$-basic graph $G(u)$ is not connected but every its 
connected component contains exactly one cycle. If two edges have no common 
vertices, then the corresponding vectors are orthogonal (see 
\eqref{Eq:ScPrDB}), so the volume is the product of volumes (of appropriate 
dimension) corresponding to connected components; thus the volume is 
$2^{d-\ell/2-n/2}$ where $d$ is the number of connected components of 
$G(u)$ and $\ell$, the number of loops in it.
 \end{proof}

Apply now Theorem \ref{Th:GenDet} for $k=1$ to the $B_n$ case. Denote by 
$T_w$ the matrix of the operator $P_w^{(1)} = \sum_{1 \le i < j \le n} 
(w_{ij}^+ (I - s_{ij}^+) + w_{ij}^- (I - s_{ij}^-)) + \sum_{1 \le i \le n} 
w_i (I - s_i)$ in the standard basis $f_1, \dots, f_n$. The matrix $T_w$ is 
symmetric; one has $(T_w)_{ij} = -w_{ij}^- + w_{ij}^+$ for $i \ne j$ , and 
its $(i,i)$-th entry is $(T_w)_{ii} = \sum_{j \ne i} (w_{ij}^+ + w_{ij}^-) 
+ 2w_i$; here we assume that $w_{ij}^\sigma = w_{ji}^\sigma$ where $\sigma 
= +$ or $-$. Combining this with Theorem \ref{Th:GenDet} for $k=1$ and 
Lemma \ref{Lm:DB} we obtain the following $B$-version of the matrix-tree 
theorem:

 \begin{theorem}
The determinant of the matrix $T_w$ described above is equal to the sum of 
weights of all the $B$-basic graphs on the vertex set $1, \dots, n$, the 
weight of each graph multiplied by $2^{2d}$ where $d$ is the number of its 
connected components. A weight of a $B$-basic graph is defined as the 
product of all $w_{ij}^+$ where $(ij)$ is a ``$+$''-edge, times the product 
of all $w_{ij}^-$ where $(ij)$ is a ``$-$''-edge, times the product of all 
the $w_i$ where $i$ is a vertex to which a loop is attached. 
 \end{theorem}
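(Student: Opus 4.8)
The plan is to derive the formula from the two results already in hand: the $k=1$ form of Theorem~\ref{Th:GenDet}, namely
\[
\det P_w^{(1)}=2^n\sum_{1\le i_1<\dots<i_n\le N} w_{i_1}\cdots w_{i_n}\,\name{vol}^2(e_{i_1},\dots,e_{i_n}),
\]
together with the volume computation of Lemma~\ref{Lm:DB}. Since $T_w$ is by construction the matrix of the operator $P_w^{(1)}$ in the orthonormal basis $f_1,\dots,f_n$, one has $\det T_w=\det P_w^{(1)}$, so it suffices to evaluate the right-hand side above for the $B_n$ collection of unit roots $e_{ij}^+$, $e_{ij}^-$, $f_i$, with weights $w_{e_{ij}^\pm}=w_{ij}^\pm$ and $w_{f_i}=w_i$. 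Each summand is then indexed by an $n$-element subset $u$ of these roots.

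First I would make explicit the bijection between the index set of this sum and the graphs appearing in the statement. Because distinct roots are distinct reflections, choosing an $n$-element subset $u$ is the same as choosing $n$ edges/loops on the vertex set $\{1,\dots,n\}$ --- a ``$+$''-edge for every $e_{ij}^+\in u$, an oriented ``$-$''-edge for every $e_{ij}^-\in u$, and a loop for every $f_i\in u$ --- that is, precisely the graph $G(u)$ defined before the lemma. Under this identification the coefficient $w_{i_1}\cdots w_{i_n}$ is exactly the weight of $G(u)$ as defined in the statement. Since $w_{ij}^\sigma=w_{ji}^\sigma$ and the volume enters only through its square, the orientation attached to a ``$-$''-edge affects neither the weight nor the surviving term, so no subset is over- or under-counted.

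It then remains to insert Lemma~\ref{Lm:DB}. It guarantees that $\name{vol}(u)=0$ unless $G(u)$ is $B$-basic, so only $B$-basic graphs contribute, and that for such a graph $\name{vol}(u)=\pm 2^{\,d-\ell/2-n/2}$, whence $\name{vol}^2(u)=2^{\,2d-\ell-n}$, the sign evaporating on squaring. Multiplying by the prefactor $2^n$ leaves a total factor $2^{\,n}\cdot 2^{\,2d-\ell-n}=2^{\,2d-\ell}$ attached to each graph, so that
\[
\det T_w=\sum_{G(u)\ B\text{-basic}}\bigl(\text{weight of }G(u)\bigr)\,2^{\,2d-\ell}.
\]
I expect the only real obstacle to be this bookkeeping and the matching of conventions rather than any new idea: one must verify that $u\mapsto G(u)$ is genuinely a bijection onto the $B$-basic graphs (the admissible $2$-cycle produced by a pair $e_{ij}^+,e_{ij}^-$ is counted once, and the loop root $f_i$ is reconciled with the diagonal entry $2w_i$ of $T_w$, the single extra factor of $2$ being supplied by $2^n$), and above all that the exponents of $2$ coming from the number of components $d$ and from the number of loops $\ell$ are tallied correctly --- it is exactly the loop contribution $2^{-\ell}$ that must be tracked with care when the two results are superimposed.
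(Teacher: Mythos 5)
Your route coincides exactly with the paper's: the proof given there is a one\-/line combination of Theorem \ref{Th:GenDet} for $k=1$ with Lemma \ref{Lm:DB}, and your intermediate steps --- $\det T_w=\det P_w^{(1)}$ since $f_1,\dots,f_n$ is orthonormal and here $V=\Real^n$ is the whole space, the bijection $u\mapsto G(u)$ between $n$-element subsets of the $B_n$ roots and the graphs of the statement, the identification of $w_{i_1}\cdots w_{i_n}$ with the graph weight, and the evaluation $2^n\,\name{vol}^2(u)=2^n\cdot 2^{2d-\ell-n}=2^{2d-\ell}$ --- are all correct.

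The real issue is the one you flag but leave unresolved: your final formula carries the multiplier $2^{2d-\ell}$, while the statement asserts $2^{2d}$, and no amount of further ``careful tracking'' closes this gap --- the two constants genuinely differ whenever $\ell>0$. Your accounting is the correct one. Sanity check with $n=1$: the only root is $f_1$, so $T_w=(2w_1)$ and $\det T_w=2w_1=2^{2d-\ell}w_1$ (here $d=\ell=1$), whereas the printed statement would give $2^{2d}w_1=4w_1$. The paper's own proof remark --- that ``the factor $2$ at the $w_i$ compensates for the $2^{-\ell/2}$ term in Lemma \ref{Lm:DB}'' --- is precisely the double count your proposal implicitly refutes: the $2$ in the diagonal entry $2w_i$ is the same factor that \emph{every} root, loop or not, contributes through $I-s=2Q_e$, and it is already absorbed into the prefactor $2^n$ of the $k=1$ theorem; nothing remains to cancel the $2^{-\ell}$ produced by squaring the volume. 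So the printed multiplier $2^{2d}$ is correct only for loopless graphs ($\ell=0$, i.e., the $D_n$ situation), and in general the theorem should read $2^{2d-\ell}$ (equivalently, $2^{2d}$ with each loop weighted by $w_i/2$). In short: your proposal is mathematically sound, follows the paper's intended argument, and in fact exposes an erratum in the statement and in the paper's compensation remark; its only defect as a proof of the literal statement is that you hedged at the end instead of asserting outright that the claimed constant cannot be obtained and that the statement requires the correction $2^{2d}\to 2^{2d-\ell}$.
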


 \begin{proof}
Follows immediately from Theorem \ref{Th:GenDet} and Lemma \ref{Lm:DB}; 
note that the factor $2$ at the $w_i$ compensates for $2^{-\ell/2}$ term in 
Lemma \ref{Lm:DB}).
 \end{proof}

An analog of the Massbaum--Vaintrob theorem for the $B_n$ case can also be 
proved using Theorem \ref{Th:GenDet} for $k=2$. The formula obtained, 
though, involves some messy summation over the set of the $3$-graph with 
possibly singular $3$-edges and with additional structure on it; we do not 
formulate the result here.

\subsection*{Acknowledgments}

The first named author was generously supported by the RFBR grants 
08-01-00110-  ``Geometry and combinatorics of mapping spaces for real and 
complex curves'' and NSh-8462.2010.1 ``Vladimir Arnold's school'', by the 
HSE Scientific foundation grant 10-01-0029 ``Graph embeddings and 
integrable hierarchies'', by the joint RFBR ans HSE grant 09-01-12185-ofi-m 
``Combinatorial aspects of integrable models in mathematical physics'', HSE 
Laboratory of mathematical research TZ 62.0 project ``Math research in 
low-dimensional topology, algebraic geometry and representation theory'', 
and by the FTsP-Kadry grant ``New methods of research of integrable systems 
and moduli spaces in geometry, topology and mathematical physics''.

The authors are indebted to all the participants of Serge Lando's seminar 
on characteristic classes (Independent University of Moscow and Higher 
School of Economics), and especially to Maxim Kazarian, for valuable 
discussions.

\end{document}